\newtheorem{theorem}{Theorem}[section]
\newtheorem{lemma}{Lemma}[section]
\theoremstyle{definition}
\newtheorem{definition}{Definition}[section]
\newtheorem{example}{Example}[section]
\title{Markov theorem for doodles on two-sphere}
\author{Konstantin Gotin}
\begin{document}

\maketitle

\begin{abstract}
In 1997 M.~Khovanov proved that any doodle can be presented as closure of twin, this result is analogue of classical Alexander's theorem for braids and links. We give a description of twins that have equivalent closures, this theorem is analogue of classical Markov theorem.
\end{abstract}

\section{Introduction}

Links can be considered as equivalence classes of planar diagrams up to Reidemeister moves. Classical Markov's and Alexander's theorems allow us to consider links as classes of braids, see \cite{1} for details. In this paper we consider doodles on $S^2$ and prove the analogue of Markov theorem for them.

Originally doodles were introduced by R. Fenn and P. Taylor in \cite{2} as a collection of piecewise-linear closed curves on a two-sphere $S^2$ without triple or higher intersection points. Later M.~Khovanov in \cite{3} offered to consider any component as an immersed circle on $S^2$. A.~Bartholomew, R. Fenn, N.~Kamada, S.~Kamada in \cite{4} generalize the notion of a doodle to be a collection of immersed circles in closed oriented surfaces of arbitrary genus and introduced the virtual doodles.

Also M.~Khovanov in \cite{3} proved that any doodle can be presented in the special form as closure of twin diagram.
The notion of a doodle is close to notion of a classical link and M.~Khovanov proved analogue of Alexander's theorem for doodles, here twin groups play the same role as braid groups in the classical knot theory.

In Section \ref{sec-2} we give some necessary definitions, most of them have analogues in the classical knot theory. For example, smoothing, bending and tightening of diagrams are defined. Section \ref{sec-3} is devoted to of sequences that relate diagram with correspondent minimal diagram and the study of their properties, see theorems \ref{th_circles_nuber}, \ref{th_bend_seq}, \ref{th_gen_bend_uniq} and \ref{th_uniq_annular}. And in Section \ref{sec-4} we introduce the equivalence relation for twins and prove Theorem \ref{th_markov}, the analogue of Markov theorem for doodles.

Now the studying of twin groups is concentrated on its algebraic properties, for example, in \cite{5} some properties of commutator subgroups were investigated. The Theorem \ref{th_markov} allows us to study twin groups for understanding the structure and classification of doodles.

\section{Basic definitions}\label{sec-2}
Main propose of this paper is to prove the analogue of the classical Markov theorem. We will define doodles and twins using classical language of diagrams to draw a strong analogy with classical knot theory.

\begin{definition}
\emph{Doodle diagram} with $m$ components is an immersion of $m$ disjoint circles to $S^2$ with no triple or higher intersection points. We assume that number of double points of doodle diagram is finite.
\end{definition}
Two doodle diagrams are said to be \emph{equivalent} if they can be related by a finite sequence of local moves $R_1$ and $R_2$ shown in Figure \ref{r1r2} and isotopies of $S^2$ (assume $R_1$ and $R_2$ to be orientation preserving if it is given).

\begin{figure}[H]
\begin{center}
\begin{tikzpicture}[scale = 0.4]
%\draw[help lines] (0,0) grid (8,6);
\draw[line width=1.4pt] plot [smooth, tension=0.7] coordinates 
{(0,0)(2.8,5)(3.8,3)(2.8,1)(0,6)};
\draw [line width=1.7pt, <->] (4.5,3) -- (6.2, 3);
\draw [line width=1.4pt] (7.5,0) -- (7.5,6);
\end{tikzpicture}
\hspace{6em}
\begin{tikzpicture}[scale = 0.4]
%\draw[help lines] (0,0) grid (8,6);
\draw[line width=1.4pt]  plot [smooth, tension=0.7] coordinates
{(0,0)(2,2)(2,4)(0,6)};
\draw[line width=1.4pt]  plot [smooth, tension=0.7] coordinates
{(3,0)(1,2)(1,4)(3,6)};
\draw [line width=1.7pt, <->] (3,3) -- (4.7, 3);
\draw[line width=1.4pt]  plot [smooth, tension=0.7] coordinates
{(7.2,0)(7,2)(7,4)(7.2,6)};
\draw[line width=1.4pt]  plot [smooth, tension=0.7] coordinates
{(5.2,0)(5.4,2)(5.4,4)(5.2,6)};
\end{tikzpicture}
\end{center}
\caption{The moves $R_1$ and $R_2$.}\label{r1r2}
\end{figure}
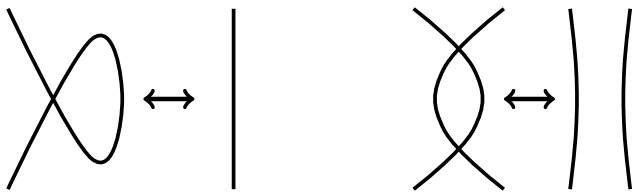

\begin{definition}
\emph{Doodle} is equivalence class of doodle diagrams. An oriented doodle is a doodle with an orientation of each component.
\end{definition}

Take a plane $\mathbb{R}^2 = \{ (x,y) : x,y \in \mathbb{R}\}$ and fix points $Q_i = (i,1)$ and $P_i = (i,0)$ for $i = 1\ldots,n$. 
\begin{definition}
A \emph{twin diagram} on $n$ strands is a configuration of $n$ arcs in $\mathbb{R} \times [0,1]$ such that:
\begin{enumerate}
\item for any $i = 1,\ldots,n$ there is unique $j = 1,\ldots,n$ such that $Q_i$ and $P_j$ are connected by a curve,\label{item1}
\item any curve is  monotonic by $y$-coordinate,\label{item2}
\item the number of double points is finite and there no triple or higher intersection points.\label{item3}
\end{enumerate}
We assume that any curve is oriented along $y$-coordinate.
\end{definition}

Two twin diagrams are \emph{equivalent} if they can be related by a finite sequence of moves $R_2$ and isotopies of $\mathbb{R} \times (0,1)$ such that conditions (\ref{item1}), (\ref{item2}), (\ref{item3}) are satisfied. 

\begin{definition}
A \emph{twin} on $n$ strands is an equivalence class of twin diagrams on $n$ strands.
\end{definition}

The product of two twins $\tau_1$ and $\tau_2$ on the same number of strands is defined by putting diagram of $\tau_1$ on top of the diagram of $\tau_2$ and squeezing along $y$-coordinate. It is easy to see that this product is well-defined and turns the set of twins on $n$ strands into a group denoted by $TW_n$. The unit element is represented by the diagram without double points. We will call such diagram the \emph{trivial} twin diagram.

\begin{theorem}[M.~Khovanov, \cite{3}]
A group $TW_n$ is generated by elements $s_1, s_2, \ldots, s_{n-1}$ presented in Figure \ref{pic_pi} which satisfy the following relations: 
\begin{center}
$s_i^{2}=e, \quad \mbox{\rm for } i = 1,\dots,n-1$, \\
$s_is_j=s_js_i, \quad \mbox{\rm if } |i-j|>1$
\end{center}

\begin{figure}[!ht]
\begin{center}
\begin{tikzpicture}[scale = 0.6]
%\draw[help lines] (0,0) grid (7,4);
\draw[line width=1.4pt] (0.5,1) -- (0.5,4);
\draw[line width=1.4pt] (2.7,1) to[in=-90, out=90] (4.3,4);
\draw[line width=1.4pt] (4.3,1) to[in=-90, out=90] (2.7,4);
\draw[line width=1.4pt] (6.5,1) -- (6.5,4);
\draw[line width=1.4pt] (2.1,1) -- (2.1,4);
\draw[line width=1.4pt] (4.9,1) -- (4.9,4);
\draw (2.7,0.5) node {$i$};
\draw (4.3,0.5) node {$i+1$};
\draw (0.5,0.5) node {$1$};
\draw (6.5,0.5) node {$n$};
\draw (1.25,2.5) node {$\ldots$};
\draw (5.75,2.5) node {$\ldots$};
\end{tikzpicture}
\end{center}
\caption{Diagram of generator $p_i$.}\label{pic_pi}
\end{figure}
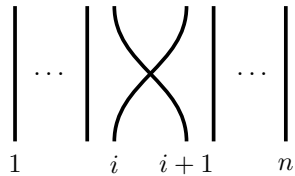
\end{theorem}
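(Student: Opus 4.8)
The plan is to show that the abstract group
$G_n=\langle s_1,\dots,s_{n-1}\mid s_i^2=e,\ s_is_j=s_js_i \text{ for }|i-j|>1\rangle$
is isomorphic to $TW_n$ by producing a surjection $\phi\colon G_n\to TW_n$ and a one-sided set-theoretic inverse $\psi\colon TW_n\to G_n$. Throughout I write $s_i$ both for a generator of $G_n$ and for the twin of Figure~\ref{pic_pi}, the context making clear which is meant. The argument runs parallel to the standard presentation proofs for braid and symmetric groups, the new point being that monotonicity in $y$ kills all cup/cap phenomena and the no-triple-point condition kills the would-be braid relation.

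First I would verify that the two families of relations hold in $TW_n$, which gives a well-defined homomorphism $\phi$ with $\phi(s_i)=s_i$. For $s_i^2=e$: stacking two copies of the $s_i$-diagram produces a bigon cobounded by the $i$-th and $(i+1)$-st strands, removed by a single $R_2$ move, leaving the trivial twin. For $s_is_j=s_js_i$ with $|i-j|>1$: since $\{i,i+1\}\cap\{j,j+1\}=\emptyset$, the crossing of $s_i$ lives in the vertical strip between columns $i,i+1$ and that of $s_j$ in the disjoint strip between columns $j,j+1$, so an isotopy supported near the latter strip slides that crossing past the height of the former without ever creating a triple point. Next I would show $\phi$ is surjective by a reading-off argument: given a twin diagram $D$, a small isotopy (allowed by the definition of twin equivalence) puts all double points at pairwise distinct heights in $(0,1)$; slicing $\mathbb{R}\times[0,1]$ along generic horizontal lines between consecutive critical heights decomposes $D$ into slabs, each (by monotonicity) containing at most one double point, a slab with none being isotopic rel boundary to a trivial tangle and a slab with one double point between the strands in current positions $i,i+1$ being $s_i$ times a trivial tangle. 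Reading slabs from top to bottom writes $D=s_{i_1}\cdots s_{i_k}$ in $TW_n$; set $w(D):=s_{i_1}\cdots s_{i_k}\in G_n$.

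The crux is to prove that the class $w(D)\in G_n$ depends only on the twin represented by $D$. Granting this, $\psi([D]):=w(D)$ is well defined, and $\psi\circ\phi=\mathrm{id}_{G_n}$ because $\phi(s_{i_1}\cdots s_{i_k})$ is represented by the vertical stack of the corresponding crossings, already in general position, whose reading-off word is again $s_{i_1}\cdots s_{i_k}$; hence $\phi$ is injective, and being also surjective it is an isomorphism, which is the assertion of the theorem. To see the invariance of $w$ I would analyse a generic path of twin diagrams realising an equivalence: $w$ can change only at a codimension-one event, and since every strand is monotonic in $y$ there are no local maxima or minima, so the only such events are (i) two double points of types $s_i,s_j$ exchanging heights and (ii) an $R_2$ move creating or destroying a bigon. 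In case (ii) the word is multiplied, in one position, by $s_is_i$, trivial in $G_n$. In case (i), if $|i-j|>1$ the exchange replaces a subword $s_is_j$ by $s_js_i$, a defining relation of $G_n$; and if $|i-j|\le 1$ such an exchange would force the strands occupying columns between $\min(i,j)$ and $\max(i,j)+1$ through a common point, i.e.\ a triple point, which the definition forbids, so this case cannot occur. Hence $w$ is constant in $G_n$ along the path.

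I expect the main obstacle to be precisely this last transversality step: making rigorous that a generic equivalence of twin diagrams decomposes into the two elementary events above (including checking that no double point can escape through the boundary $y=0,1$ because the endpoints are fixed and the strands monotonic), and verifying carefully that the no-triple-point hypothesis is exactly what prevents the appearance of a braid-type relation $s_is_{i+1}s_i=s_{i+1}s_is_{i+1}$. Everything else — the relations holding in $TW_n$, the reading-off surjection, and the identity $\psi\circ\phi=\mathrm{id}$ — is routine once the diagrammatic normal form is set up.
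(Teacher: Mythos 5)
The paper does not prove this statement at all: it is imported verbatim from Khovanov \cite{3}, so there is no internal proof to compare yours against. Your proposal is the standard diagrammatic presentation argument (relations hold, reading-off word gives surjectivity, invariance of the word under equivalence gives injectivity), and it is sound in outline; in fact it proves the stronger, intended statement that the listed relations are \emph{defining} relations, whereas the theorem as literally phrased only asserts generation and that the relations hold. Two remarks on the part you flag as the main obstacle. First, your treatment of the exchange event can be tightened: strict monotonicity in $y$ forces every strand to meet a horizontal line at most once, so two double points lying at the same height can never share a strand; hence only exchanges with $|i-j|>1$ can occur at all, and the no-triple-point condition is not what excludes the case $|i-j|=1$ --- its role is rather to guarantee that no $R_3$-type event ever happens, i.e.\ that no braid relation $s_is_{i+1}s_i=s_{i+1}s_is_{i+1}$ needs to be imposed, as you correctly anticipate. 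Second, the paper's definition of twin equivalence is already ``finitely many $R_2$ moves together with isotopies of $\mathbb{R}\times(0,1)$ preserving conditions (\ref{item1})--(\ref{item3})'', so you do not need a full codimension-one classification of generic paths of immersions: it suffices to observe that such an isotopy keeps the number of double points fixed (so nothing can escape through $y=0,1$) and can only permute the heights of crossings on disjoint pairs of strands, changing the word by the commutation relations, while each $R_2$ move inserts or deletes a subword $s_is_i$. With that, your $\psi$ is well defined, $\psi\circ\phi=\mathrm{id}_{G_n}$, and the isomorphism follows; the remaining details are routine, so the proposal is essentially a correct self-contained proof of a result the paper merely cites.
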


\begin{definition}
For a twins $\alpha$ and $\beta$ denote by $\alpha \otimes \beta$ the twin which diagram can be presented by adding diagram of $\alpha$ to diagram of $\beta$ from the left. 
\end{definition}

Further we consider $S^2$ presented by $\mathbb{R}^2 \cup \{\infty\}$ and consider the twin diagrams as diagrams on two-sphere. The following two definitions give a correspondence between doodle diagram and twin diagram and correspondence between doodle and twin.

\begin{definition}
Let $d$ be a twin diagram on $n$ strands.
The \emph{closure} of $d$ denoted by $\widehat{d}$ is a diagram obtained from $d$ by joining $P_i$ and $Q_i$ without adding new double points for $i = 1,\ldots,n$. Orientation of $d$ induces orientation of its closure.
\end{definition}

\begin{definition}
Let $d$ be a diagram of a twin $\beta$.
The \emph{closure} $\widehat{\beta}$ of $\beta$ is the doodle corresponding to diagram $\widehat{d}$. Obviously, closure of a twin is well-defined.
\end{definition}

The following theorem is some analogue of classical Alexander's theorem for links.
\begin{theorem}[M.~Khovanov, \cite{3}]\label{th_kh1}
Every oriented doodle on a two-sphere is the closure of a twin. 
\end{theorem}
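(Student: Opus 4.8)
The plan is to mimic Alexander's braiding argument, with the braid axis replaced by a marked point $\ast\in S^2$ and ``pushing an arc across the axis'' replaced by a move that is legal for doodles, i.e.\ realizable through $R_1$, $R_2$ and ambient isotopy of $S^2$.

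First I would make precise which diagrams on $S^2$ are closures of twins. Fix $\ast\in S^2$ off the diagram and let $\theta$ denote the angular coordinate around $\ast$, defined on $S^2$ minus $\ast$ and its antipode. I claim a doodle diagram $D$ is isotopic (on $S^2$) to the closure of a twin exactly when, for a suitable $\ast$, the diagram avoids $\ast$ together with a disk about its antipode, every double point of $D$ is transverse to the meridians $\theta=\mathrm{const}$, and the orientation of $D$ crosses every meridian in the direction of increasing $\theta$. Indeed, cutting such a $D$ along a single meridian $M$ missing the double points produces $n$ arcs, where $n$ is the number of points of $D\cap M$; these arcs are monotone in $\theta$ and join $n$ points on one side of $M$ to $n$ on the other, so they form a twin diagram whose closure is isotopic to $D$. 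Conversely the closure of any twin has this form, with $\ast$ the point about which all strands wind.

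Now take any diagram $D$ of the given oriented doodle $\delta$ and a point $\ast\notin D$, and put $D$ in general position for $\theta$: finitely many nondegenerate critical points of $\theta|_D$, none of them at a double point, and every double point meridian-transverse. Call a maximal subarc of $D$ along which $\theta$ is non-increasing (traversed with the orientation) a \emph{bad arc}; such an arc is embedded and misses the double points. Let $b(D)$ be the number of bad arcs. If $b(D)=0$ then $D$ is braided in the sense above and we are done. Otherwise I would choose a bad arc $\gamma$, subtending a short angular interval (subdividing it first if necessary), and replace it by an arc $\gamma'$ with the same endpoints that runs the ``long way'' around $\ast$, pushed in close to $\ast$; choosing $\gamma$ radially innermost arranges that the band swept out by the isotopy carrying $\gamma$ to $\gamma'$ contains no double point of $D$. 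The effect on the diagram is then an ambient isotopy together with finitely many $R_2$-moves that cancel, in pairs, the double points created along the two radial legs of $\gamma'$; and the move strictly lowers $b(D)$ and creates no new bad arc elsewhere. A finite induction on $b(D)$ then yields a braided diagram $D'$ of $\delta$; cutting $D'$ along a meridian produces a twin $\beta$ with $\widehat{\beta}=\delta$.

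The hard part --- and the whole reason doodles are harder here than links --- is the elimination move of the previous paragraph. For links, pushing a wrong-way arc over the axis is an isotopy of $S^3$, and the crossings it creates are harmless because an over/under records them; for doodles there is no such slack, and a careless push changes the doodle (it amounts to an $R_3$-move, which doodles forbid). So the genuine content is a lemma guaranteeing that a bad arc can always be removed by a push whose swept band is free of double points of $D$, so that every double point it creates is immediately killed by an $R_2$-move --- and proving this, by an innermost-arc and induction argument that keeps the rest of the diagram fixed, is exactly the point at which the bending and tightening operations of Section~\ref{sec-3} are meant to do the work. (No orientation difficulties arise: one simply treats every clockwise piece of every component as a bad arc, so that in the end all of $D'$ winds the same way about $\ast$.)
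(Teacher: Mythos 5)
There is a genuine gap, and you in fact name it yourself. First, note that the paper does not prove Theorem~\ref{th_kh1} at all: it is quoted from Khovanov's paper \cite{3}, so there is no ``paper proof'' for your argument to parallel; your proposal has to stand on its own. As an Alexander--Vogel-style outline (fix $\ast$, measure failure to be braided by the number of bad arcs, remove bad arcs one at a time) it is the right general shape, and your characterization of closures of twins via $\theta$-monotonicity is fine. But the entire content of the theorem is the elimination move, and that is exactly the step you do not prove. Your concrete claim --- that choosing the bad arc $\gamma$ \emph{radially innermost} forces the band swept by the push to contain no double point of $D$ --- is false as stated. Take two strands, each monotone in $\theta$ (hence containing no bad arc), which cross each other twice in a small biangle lying between $\gamma$ and $\ast$ inside the angular sector of $\gamma$. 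Then $\gamma$ is still the innermost \emph{bad} arc, yet any push of $\gamma$ the long way around $\ast$ that stays close to $\ast$ must sweep across those two double points, and carrying an arc across a double point is precisely the forbidden $R_3$-type move for doodles. So the induction on $b(D)$ does not close: you need either to first clean out such configurations (e.g.\ by $R_2$-moves removing biangles below $\gamma$, with an argument that this terminates and does not create new bad arcs), or a genuinely different reduction; none of this is supplied. There are also smaller unjustified assertions feeding into the same step: that bad arcs can be assumed embedded and disjoint from the double points, and that the replacement arc $\gamma'$ creates no new bad arcs, both of which need an argument once double points are allowed to sit on wrong-way branches.

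Your closing appeal to the bending/tightening machinery of Section~\ref{sec-3} cannot fill this hole: everything there (Lemma~\ref{smooth-lemma}, Theorems~\ref{th_biangles}--\ref{th_uniq_annular}) is about diagrams that are \emph{already} annular, i.e.\ already presented as closures of twins, and studies how such diagrams relate to the minimal diagram; it presupposes the conclusion of Theorem~\ref{th_kh1} rather than providing the missing lemma. So, as written, the proposal is a plausible strategy with the one genuinely hard step --- removing a bad arc using only $R_1$, $R_2$ and isotopy in the presence of nearby double points --- acknowledged but left unproved.
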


And other result states the important property of doodles namely the existence and uniqueness of so-called minimal doodle. 

\begin{theorem}[M.~Khovanov, \cite{3}]\label{th_kh2}
A doodle has a unique (up to the transformation in Figure \ref{pic_circle-shift}) diagram with a minimal number of double points. This diagram can be constructed from any other doodle diagram by applying only those moves $R_1$ and $R_2$ that reduce the number of double points.
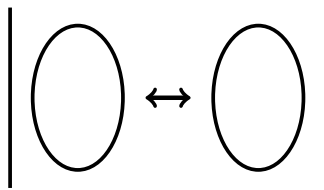
\begin{figure}[H]
\begin{center}
\begin{tikzpicture}[scale = 0.6]
%\draw[help lines] (0,0) grid (7,4);
\draw[line width=1.4pt] (0,0) -- (0,4);
\draw[line width=1.4pt] (1.5,2) circle [x radius=1cm, y radius=1.6cm];
\draw[line width=1.4pt] (7,0) -- (7,4);
\draw[line width=1.4pt] (5.5,2) circle [x radius=1cm, y radius=1.6cm];
\draw[line width=1.7pt, <->] (3,2) -- (4, 2);
\end{tikzpicture}
\end{center}
\caption{Circles shift.}\label{pic_circle-shift}
\end{figure}
\end{theorem}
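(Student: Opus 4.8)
The plan is to translate ``minimal'' into a purely local condition on diagrams and then to prove a confluence statement for the rewriting these local moves define. Given a doodle diagram $D$, call a complementary region a \emph{monogon} if its closure is a disk bounded by a single arc of $D$ joining one double point to itself, and a \emph{bigon} if its closure is a disk bounded by two arcs of $D$ joining the same pair of double points; say that $D$ is \emph{reduced} if it contains neither. The move $R_1$ removes a monogon together with its double point and $R_2$ removes a bigon together with its two double points, each strictly lowering the number of double points; conversely, a region that is a monogon or bigon is by definition empty in its interior, so whenever $D$ fails to be reduced one of these crossing-decreasing moves is directly available. Hence from an arbitrary diagram one reaches a reduced diagram in finitely many steps using only $R_1$ and $R_2$ in the reducing direction, and this settles the ``constructibility'' clause provided we show that reduced is the same as minimal.

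The substance of the theorem is thus the assertion that \emph{any two reduced diagrams of the same doodle are related by an isotopy of $S^2$ and finitely many circle shifts}: this gives uniqueness directly, and it gives minimality since a doodle diagram with fewer double points would reduce to a reduced diagram of the same doodle with at most that many double points, which by the assertion must be isotopic, up to circle shifts, to our given reduced diagram --- impossible, as the two have different double-point counts. To begin the proof I would peel off all \emph{free circles}, i.e.\ components with no double points, one at a time using isotopies and the circle shift of Figure~\ref{pic_circle-shift}; after this one may assume there are no crossingless components, so that $D$ is the $1$-skeleton of a CW decomposition of $S^2$, every face has at least three sides by reducedness, and innermost-disk and Euler-characteristic arguments become available.

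Now take reduced diagrams $D$ and $D'$ of one doodle and a finite chain of moves $R_1^{\pm1}$, $R_2^{\pm1}$ and isotopies from $D$ to $D'$. Since $D$ is reduced, the first non-isotopy move must \emph{increase} the number of double points, because a decreasing move would exhibit a monogon or bigon in (a copy isotopic to) $D$; symmetrically, read from the side of $D'$, the last non-isotopy move is increasing. The main obstacle, as I see it, is to show that such a chain can always be replaced by a \emph{single-peaked} one: nondecreasing in double-point count up to some diagram $D''$, then nonincreasing. I would attack this by regarding the chain as a movie --- a generic immersed cobordism between curve systems in $S^2\times[0,1]$ whose singular moments are exactly births and deaths of monogons and bigons and circle shifts --- and then sliding each ``valley'' (a decrease immediately followed by an increase) apart and cancelling inverse pairs, in the spirit of standard movie-move normalizations as in the proofs of the Reidemeister and Markov theorems; the absence of an $R_3$-type move, far from being an obstruction, limits the ways elementary pieces of the movie can interact and ought to simplify this analysis.

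Granting the single-peaked form, the descent from the peak $D''$ to $D$ and the descent from $D''$ to $D'$ consist only of monogon and bigon removals (interspersed with isotopies). I would finish by checking that this crossing-decreasing system is \emph{locally confluent}: when a diagram admits two distinct such removals --- two disjoint reducible faces, which commute at once; a monogon and a bigon, or two bigons, sharing an edge or a double point; or one such region nested inside another --- the two outcomes admit reductions to a common diagram, a finite and essentially routine case analysis. Local confluence together with termination (the double-point count strictly drops at every non-isotopy step) yields, by Newman's lemma, that $D''$ has a unique reduced descendant up to isotopy; as $D$ and $D'$ are both reduced descendants of $D''$, they coincide up to isotopy, and reinstating the free circles removed at the outset shows that $D$ and $D'$ agree up to isotopy and circle shifts. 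This proves uniqueness, and minimality follows as indicated above.
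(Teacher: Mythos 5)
This statement is quoted from Khovanov's paper \cite{3} and is not proved in the present paper, so there is no internal proof to compare against; what can be assessed is whether your argument would stand on its own, and as written it does not yet. The two steps that carry all the weight are exactly the ones you defer: (i) the replacement of an arbitrary chain of $R_1^{\pm1},R_2^{\pm1}$ moves by a single-peaked one, and (ii) local confluence of the crossing-decreasing moves. For (i), ``sliding valleys apart and cancelling inverse pairs'' is only straightforward when the decreasing move and the following increasing move have disjoint supports; when the created monogon/bigon reuses arcs or double points modified by the removal, the interchange is not a formal commutation and needs a genuine case analysis (this normalization is essentially equivalent in difficulty to the theorem itself, which is why Khovanov's and related proofs, e.g.\ via curves in minimal position or the de Graaf--Schrijver style argument, do real work here). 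For (ii), the cases where a monogon and a bigon share a corner double point, or where one reducible face is destroyed by removing another, are asserted to be routine but are not checked, and they are not obviously so.

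There is also a concrete structural flaw in the confluence framework: the reducing system is \emph{not} locally confluent up to isotopy alone, precisely because of circle shifts. Starting from a reduced diagram with a crossingless component next to a strand, push the circle across the strand by an $R_2$; the resulting diagram admits two reductions whose outcomes differ by a circle shift and are not isotopic, so Newman's lemma in the form you invoke cannot give a unique reduced descendant. Your remedy of ``peeling off free circles at the outset'' does not fix this, because components can become crossingless (and free circles can be created) at intermediate stages of the chain or of the descent from the peak, so the hypothesis ``no crossingless components'' is not preserved. The whole argument has to be run modulo the circle-shift equivalence (confluence modulo an equivalence relation, with the attendant stronger hypotheses), and that, together with the unproved single-peak normalization, means the proposal is a plausible strategy outline rather than a proof.
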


Following definition generalize the notion of closure of twin diagram. We omit conditions for double points to be located on $\mathbb{R} \times (0,1)$.
\begin{definition}
Doodle diagram $D$ on $S^2$ is said to be \emph{annular diagram} if there is isotopy of $S^2$, connecting $D$ and twin closure $\widehat{\beta}$ for some twin $\beta$. We will consider annular diagrams up to isotopies of $S^2$.
\end{definition}

\begin{definition}
The transformation defined in Figure \ref{smooth} we will call the \emph{smoothing} of double point.
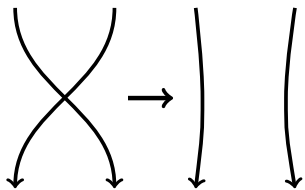
\begin{figure}[H]
\begin{center}
\begin{tikzpicture}[scale = 0.6]
%\draw[help lines] (0,0) grid (8,4);
\draw[line width=1.4pt, <-] (1,0) to[in=-90, out=90] (3.2,4);
\draw[line width=1.4pt, <-] (3.2,0) to[in=-90, out=90] (1,4);
\draw [line width=1.7pt, ->] (3.5,2) -- (4.5, 2);
\draw[line width=1.4pt, <-]  plot [smooth, tension=0.7] coordinates
{(7.2,0)(7,2)(7.2,4)};
\draw[line width=1.4pt, <-]  plot [smooth, tension=0.7] coordinates
{(5,0)(5.15,2)(5,4)};
\end{tikzpicture}
\end{center}
\caption{Smoothing of double point.}\label{smooth}
\end{figure}
\end{definition}

\begin{definition}
The result of smoothing of all double points in oriented doodle diagram $D$ is a collection of finite number of disjoint oriented simple curves. This curves are called \emph{the Seifert circles} of $D$.
\end{definition}

Two arcs of doodle diagram $D$ are belong to different Seifert circles if they are belong to different Seifert circles after smoothing all double points in $D$. Otherwise two arcs belong to the same Seifert circle. Collection of circles on the two-sphere is concentric if it can be deformed to diagram of closure of trivial twin by isotopy of $S^2$.

\begin{lemma}\label{smooth-lemma}
Let $D$ be an angular doodle diagram. The result of smoothing of all double points in $D$ is a collection of concentric circles with the same orientation. 
\end{lemma}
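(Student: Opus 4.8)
The plan is to reduce first to the case where $D$ is literally the closure $\widehat{\beta}$ of a twin $\beta$, and then to push the smoothing into the twin part. Smoothing of a double point is a purely local operation, and the orientation-respecting smoothing of Figure \ref{smooth} is determined only by the orientations of the two arcs meeting at the crossing; hence it commutes with any homeomorphism of $S^2$. Thus if $D=\phi(\widehat{\beta})$ for an ambient isotopy $\phi$ of $S^2$ and some twin $\beta$, the Seifert circles of $D$ are exactly the $\phi$-images of the Seifert circles of $\widehat{\beta}$. Since $\phi$ is isotopic to the identity, it carries a concentric collection of equally oriented circles to a concentric collection of equally oriented circles (composing $\phi$ with an isotopy that trivializes the family of $\widehat\beta$ again gives an isotopy to the closure of the trivial twin). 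So it suffices to treat $D=\widehat{\beta}$.

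For $D=\widehat{\beta}$ I would use that the closure arcs joining $P_i$ to $Q_i$ are added without new double points, so every double point of $\widehat{\beta}$ already lies in the twin diagram $\beta$. Therefore smoothing all double points of $\widehat{\beta}$ yields precisely the closure of the diagram obtained by smoothing all double points of $\beta$; the smoothing and the closure operations commute here.

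The core of the argument is then to identify the smoothing of a twin diagram. Each strand of $\beta$ is oriented upward along the $y$-coordinate, and locally the smoothing of Figure \ref{smooth} replaces a crossing of two upward arcs by two disjoint upward arcs, so it preserves monotonicity in $y$. Following a smoothed strand one therefore always moves strictly upward; consequently the result of smoothing all double points of $\beta$ is a system of $n$ pairwise disjoint simple arcs, each monotonic in $y$, whose bottom endpoints are $P_1,\dots,P_n$ and whose top endpoints are $Q_1,\dots,Q_n$. A standard intermediate-value argument (comparing the $x$-coordinates of two such arcs as functions of $y$) shows that disjoint monotonic arcs in $\mathbb{R}\times[0,1]$ cannot realize an inversion of the linear order of their endpoints; hence the arc through $P_i$ terminates at $Q_i$ for every $i$, and the whole system is isotopic in the strip, rel its endpoints, to the trivial twin diagram, with all arcs still cooriented upward. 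Closing up, the smoothed $\widehat{\beta}$ is isotopic on $S^2$ to the closure of the trivial twin, which by definition is a concentric family of circles, and the common upward orientation of the strands induces one and the same orientation on all of these circles. Combined with the first paragraph, this proves the lemma.

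I expect the only genuine work to be in the third paragraph, namely making precise that smoothing preserves monotonicity of the strands and that the smoothed twin is actually isotopic to the trivial twin diagram (not merely that it induces the identity endpoint pairing); the reduction via isotopies of $S^2$ and the commutation of smoothing with the closure are essentially formal.
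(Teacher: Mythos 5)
Your proposal is correct and follows essentially the same route as the paper: reduce by isotopy invariance of smoothing to the case $D=\widehat{\beta}$, then observe that smoothing the twin diagram yields the trivial twin diagram, so the closure is a concentric, coherently oriented family of circles. The paper simply declares the second step obvious, while you supply the monotonicity and order-preservation details that justify it.
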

\begin{proof}
Since smoothing is invariant under isotopy, we can assume that $D = \widehat{d}$, for some twin diagram $d$.
Result of smoothing of all double points in $d$ is the trivial twin diagram. Hence the statement of the lemma is obvious.
\end{proof}

\begin{definition}
The following fragments of diagram presented on Figure \ref{pic_biangles} are called \emph{regular} (on the left) and \emph{irregular} (on the right) biangles.
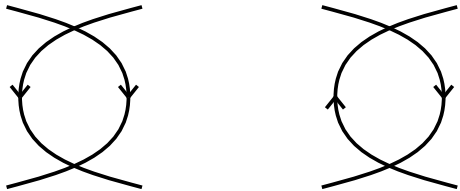
\begin{figure}[H]
\begin{center}
\begin{tikzpicture}[scale = 0.6]
%\draw[help lines] (0,0) grid (3,4);
\draw[line width=1.4pt]  (0,0) to[in=-90,out=15] (2.7,2);
\draw[line width=1.4pt]  (2.7,2) to[in=-15,out=90] (0,4);
\draw[line width=1.4pt]  (3,0) to[in=-90,out=165] (0.3,2);
\draw[line width=1.4pt]  (0.3,2) to[in=195,out=90] (3,4);

\draw[line width=1.4pt]  (0.3,2) -- (0.5,2.25);
\draw[line width=1.4pt]  (0.3,2) -- (0.1,2.25);
\draw[line width=1.4pt]  (2.7,2) -- (2.9,2.25);
\draw[line width=1.4pt]  (2.7,2) -- (2.5,2.25);
\end{tikzpicture}
\hspace{6em}
\begin{tikzpicture}[scale = 0.6]
%\draw[help lines] (0,0) grid (3,4);
\draw[line width=1.4pt]  (0,0) to[in=-90,out=15] (2.7,2);
\draw[line width=1.4pt]  (2.7,2) to[in=-15,out=90] (0,4);
\draw[line width=1.4pt]  (3,0) to[in=-90,out=165] (0.3,2);
\draw[line width=1.4pt]  (0.3,2) to[in=195,out=90] (3,4);

\draw[line width=1.4pt]  (0.3,2) -- (0.5,1.75);
\draw[line width=1.4pt]  (0.3,2) -- (0.1,1.75);
\draw[line width=1.4pt]  (2.7,2) -- (2.9,2.25);
\draw[line width=1.4pt]  (2.7,2) -- (2.5,2.25);
\end{tikzpicture}
\end{center}
\caption{Regular and irregular biangles.}\label{pic_biangles}
\end{figure}
\end{definition}

\begin{lemma}\label{lem_biangles}
Any angular diagram contains at most two irregular biangles. 
\end{lemma}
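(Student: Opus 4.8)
The plan is to use the ``annular'' structure of the diagram together with a winding (angular) coordinate. Since the notions of biangle and of being regular/irregular are invariant under isotopies of $S^2$, I would first invoke the definition of annular diagram to reduce to the case $D=\widehat{\beta}$ for a twin $\beta$. Drawing the closure in the usual way, $\widehat{\beta}$ lies in an annular region $A\subset S^2$ whose complement consists of two open disks; fix a point $p_1$ in one of them and $p_2$ in the other (the two ``poles''), so that $\widehat{\beta}\subset S^2\setminus\{p_1,p_2\}$, and let $\phi\colon S^2\setminus\{p_1,p_2\}\to S^1$ be the angular coordinate, i.e. the extension of the $y$-coordinate of the twin to the annulus. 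Because every strand of $\beta$ is monotonic in $y$ and oriented along increasing $y$, and the closure merely wraps these strands around the annulus, every arc of $\widehat{\beta}$ between consecutive double points is monotone in $\phi$; after a small isotopy we may take this monotonicity to be strict.

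The key step is the claim that a biangle of $\widehat{\beta}$ whose disk $\Delta$ contains neither pole is regular. Let $a,b$ be the two arcs bounding $\Delta$ and $v_1,v_2$ their common endpoints; comparing with Figure~\ref{pic_biangles}, the biangle is regular precisely when, after naming things so that $a$ runs from $v_1$ to $v_2$, the arc $b$ also runs from $v_1$ to $v_2$, and it is irregular precisely when $b$ runs from $v_2$ to $v_1$. As $\Delta$ is a disk disjoint from $\{p_1,p_2\}$, the coordinate $\phi$ lifts to a continuous function $\widetilde{\phi}\colon\Delta\to\mathbb{R}$. In the irregular case $\widetilde{\phi}$ strictly increases along $a$ from $v_1$ to $v_2$, giving $\widetilde{\phi}(v_2)>\widetilde{\phi}(v_1)$, and strictly increases along $b$ from $v_2$ to $v_1$, giving $\widetilde{\phi}(v_1)>\widetilde{\phi}(v_2)$; this contradiction proves the claim.

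It then remains only to count. By the claim, the disk of every irregular biangle of $\widehat{\beta}$ contains $p_1$ or $p_2$; since a biangle is a bigon face of the diagram and distinct faces are disjoint, an irregular biangle is either the face of $\widehat{\beta}$ containing $p_1$ or the face containing $p_2$. Hence there are at most two of them, and the bound passes back to $D$. (If a single face should happen to contain both poles this only sharpens the count.) I expect the genuine work to lie not in this pigeonhole argument but in the set-up: reading off from Figure~\ref{pic_biangles} that ``irregular'' means exactly that the two boundary arcs of the bigon are oppositely oriented, and making precise the annular winding coordinate along which every arc is monotone — the latter is exactly the property supplied by the hypothesis that $D$ is an annular diagram, and is the reason the statement fails for arbitrary doodle diagrams.
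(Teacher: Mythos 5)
Your proof is correct, but it takes a genuinely different route from the paper's. The paper argues via smoothing: the oriented smoothing of the two double points of an irregular biangle produces a Seifert circle bounding a region free of other Seifert circles, and by Lemma \ref{smooth-lemma} the Seifert circles of an annular diagram form a concentric family, which has only two such ``innermost'' circles; hence at most two irregular biangles. You avoid Seifert circles and Lemma \ref{smooth-lemma} altogether: you reduce to a twin closure, use the monotonicity of all arcs in the angular coordinate of the annulus, and show by lifting $\phi$ over the (simply connected, pole-free) lens that an irregular biangle's boundary arcs cannot be anti-parallel unless the lens contains one of the two poles, after which a pigeonhole over faces gives the bound. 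The two arguments are close cousins --- your ``lens contains a pole'' is exactly the reason the paper's smoothed circle is one of the two extreme circles of the concentric family --- but yours is more self-contained and spells out precisely where annularity enters (and why the statement fails for general doodle diagrams), at the cost of re-deriving in the winding-coordinate language the coherence that the paper packages into Lemma \ref{smooth-lemma}; the paper's version is shorter once that lemma is available. One point worth making explicit in your write-up, since it is only implicit in both arguments: a biangle here means a bigon whose lens is an empty face (as drawn in Figure \ref{pic_biangles}), which is what justifies your step that distinct irregular biangles occupy distinct faces.
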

\begin{proof}
The smoothing double points in irregular biangle gives us the Seifert circle that bound the area which doesn't contains another points Seifert circles. Since collection of concentric circles has only two such circles, the number of irregular biangles cannot be greater than two.
\end{proof}

\begin{definition}
Applying $R_2$ to irregular biangle is said to be \emph{tightening}, the inverse move is said to be \emph{bending}.
\end{definition}

\section{Preliminary theorems}\label{sec-3}
According to Theorem \ref{th_kh2} any doodle has diagram with minimal number of double points. Moreover, the sequence of moves connecting any diagram of doodle with equivalent minimal diagram has an important property, it contains only moves that reduces number of double points. In this section we consider some other properties of this sequence.

\begin{theorem}\label{th_biangles}
Let $D$ be an angular diagram and $D'$ be the result of sequence of tightenings of $D$. If $D$ doesn't contain regular biangles, then $D'$ doesn't contain regular biangles.
\end{theorem}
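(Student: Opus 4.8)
The plan is to track what a single tightening does locally and argue that it cannot create a regular biangle. Recall that a tightening applies $R_2$ to an irregular biangle, removing two double points, while a regular biangle is a fragment where two arcs cross twice and the orientations at the two crossing points are "parallel" in the sense of Figure \ref{pic_biangles} (left), as opposed to "antiparallel" in the irregular case. So I would first set up notation: let $D = D_0, D_1, \ldots, D_k = D'$ be the sequence, where $D_{t+1}$ is obtained from $D_t$ by one tightening, and suppose for contradiction that some $D_{t+1}$ contains a regular biangle while $D_t$ does not. By minimality of $t$ this reduces the whole question to a single tightening: I must show that if $D_t$ has no regular biangle, neither does $D_{t+1}$.

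Next I would analyze the locality of the move. The tightening removes an irregular biangle $B$, i.e. two crossings $x, y$ and the bigon region they bound; all arcs outside a small disk neighborhood of $B$ are unchanged. A regular biangle in $D_{t+1}$ is a pair of crossings $x', y'$ bounding a bigon. Either both $x', y'$ survive from $D_t$ (they are crossings not destroyed by the move), or at least one of them is newly created — but tightening via $R_2$ creates no new crossings, it only removes the two of $B$. Hence $x', y'$ are both crossings of $D_t$ as well. The only way $\{x', y'\}$ fails to already bound a regular biangle in $D_t$ is that the bigon disk they bound in $D_{t+1}$ is not a bigon disk in $D_t$: that disk must have "swallowed" part of $B$, meaning the region between $x'$ and $y'$ in $D_t$ was interrupted by an arc of the biangle $B$. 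So the key geometric claim is: the arcs of $B$ cannot lie in the bigon bounded by $x'$ and $y'$ in a way that, after tightening, leaves behind a regular biangle. Here I would use Lemma \ref{smooth-lemma}: after smoothing, $D_t$ is a system of concentric coherently-oriented circles, so the Seifert circle structure is very rigid, and the two arcs forming any bigon belong to adjacent Seifert circles. I would use the orientation data from the smoothing (the distinction regular/irregular is exactly the relative orientation of the two strands along the bigon) to show that removing the antiparallel biangle $B$ cannot convert an antiparallel-or-absent configuration at $\{x',y'\}$ into a parallel one, because tightening does not reverse the orientation of any arc and the Seifert-circle nesting is preserved.

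The main obstacle I expect is the combinatorial case analysis of how the biangle $B$ can sit relative to a prospective regular biangle $\{x', y'\}$: the two arcs of $B$ could each enter the $\{x',y'\}$-bigon, or only one could, or they could share a strand with $x'$ or $y'$, and one must check that in every case the pre-image configuration in $D_t$ was already a regular biangle (contradiction) or is forbidden by concentricity. I would organize this by first reducing, using Lemma \ref{lem_biangles} and the concentric-circles picture, to the finitely many ways two bigons can overlap on $S^2$, and then eliminating each using the orientation invariant. A secondary subtlety is making "regular/irregular biangle" precise as an orientation condition so that the phrase "tightening preserves the sign" is an actual argument and not a picture; I would state this as a short preliminary observation before the case analysis.
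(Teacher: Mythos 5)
There is a genuine gap, and it sits exactly at the point where your contradiction is supposed to come from. Your plan reduces to a single tightening $D_t \to D_{t+1}$ and then invokes Lemma \ref{smooth-lemma} to say that the smoothing of $D_t$ is a system of concentric coherently oriented circles. But Lemma \ref{smooth-lemma} applies only to \emph{annular} diagrams, and the intermediate diagrams $D_t$ with $t\geq 1$ are not annular (the paper even remarks, after Theorem \ref{th_uniq_annular}, that the result of a tightening of an annular diagram is never annular). The conclusion of the lemma can genuinely fail after a tightening: take two coherently nested circles, apply one bending so they overlap in the ``Venn'' position, and tighten the lens-shaped irregular biangle; the result is two side-by-side circles which, on $S^2$, are nested with \emph{opposite} orientations, so their (trivial) smoothing is not a coherent concentric family. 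Hence the orientation/nesting rigidity you want to use to kill the bad overlap configurations is simply not available at stage $t$, and the minimal-counterexample reduction to one tightening is precisely what destroys the hypothesis you need. A secondary issue is that the heart of the argument --- the case analysis of how the removed biangle $B$ can sit inside the prospective bigon at $\{x',y'\}$, including the cases where $B$ shares a strand with it --- is only announced, not carried out, so even granting the invariant the proof is a plan rather than an argument.

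The paper's proof avoids both problems by never leaving the original diagram: if a regular biangle appears after $k$ tightenings, the fragment of $D$ to which those $k$ moves (and the eventual $R_2$ on the regular biangle) apply is uniquely determined --- a strand threading a nest of $k$ irregular biangles, as in Figure \ref{pic_k-bendings} --- and smoothing \emph{that} fragment of $D$ produces a pattern of circles that cannot occur among concentric, coherently oriented circles. Since $D$ itself is annular, this contradicts Lemma \ref{smooth-lemma} directly. If you want to salvage your approach, you must either pull the local picture around $x',y'$ back through all $t$ earlier tightenings to $D$ (which is essentially the paper's argument), or separately prove that under the no-regular-biangle hypothesis the concentric-coherent structure of the smoothing survives each tightening --- a nontrivial claim you have not established.
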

\begin{proof}
Suppose the regular biangle arises after applying $k$ tightenings for some $k \geq 1$. The fragment of the diagram to which these transformations can be applied is uniquely determined. This fragment before (on the left) and after smoothing (on the right) is illustrated in Figure \ref{pic_k-bendings}.
\begin{figure}[H]
\begin{center}
\begin{tikzpicture}[scale = 0.3]
%\draw[help lines] (0,0) grid (15,14);
\draw[line width=1.4pt]  plot [smooth, tension=0.7] coordinates
{(0,1)(9,2)(14,5)(14,9)(9,12)(0,13)};

\draw[line width=1.4pt]  (0,7) to[out=90, in=-115] (2,14);
\draw[line width=1.4pt]  (2,0) to[out=115, in=-90] (0,7);
\draw[line width=1.4pt]  (0,7) -- (-0.3,7.3);
\draw[line width=1.4pt]  (0,7) -- (0.3,7.3);

\draw[line width=1.4pt]  (2,7) to[out=90, in=-115] (4,14);
\draw[line width=1.4pt]  (4,0) to[out=115, in=-90] (2,7);
\draw[line width=1.4pt]  (2,7) -- (1.7,6.7);
\draw[line width=1.4pt]  (2,7) -- (2.3,6.7);

\draw[line width=1.4pt]  (10,7) to[out=90, in=-115] (12,14);
\draw[line width=1.4pt]  (12,0) to[out=115, in=-90] (10,7);
\draw[line width=1.4pt]  (10,7) -- (9.7,6.7);
\draw[line width=1.4pt]  (10,7) -- (10.3,6.7);

\draw[line width=1.4pt] (14,5) -- (14.4,5);
\draw[line width=1.4pt] (14,5) -- (13.92,5.4);
\draw (6,7) node {\huge\ldots};
\end{tikzpicture}
\hspace{4em}
\begin{tikzpicture}[scale = 0.3]
%\draw[help lines] (0,0) grid (15,14);
\draw[line width=1.4pt,gray!30] plot [smooth, tension=0.7] coordinates
{(0,1)(9,2)(14,5)(14,9)(9,12)(0,13)};

\draw[line width=1.4pt,gray!30] (0,7) to[out=90, in=-115] (2,14);
\draw[line width=1.4pt,gray!30] (2,0) to[out=115, in=-90] (0,7);
\draw[line width=1.4pt] (0,7) -- (-0.2,7.2);
\draw[line width=1.4pt] (0,7) -- (0.2,7.2);

\draw[line width=1.4pt,gray!30] (2,7) to[out=90, in=-115] (4,14);
\draw[line width=1.4pt,gray!30] (4,0) to[out=115, in=-90] (2,7);
\draw[line width=1.4pt] (2,7) -- (1.8,6.8);
\draw[line width=1.4pt] (2,7) -- (2.2,6.8);

\draw[line width=1.4pt,gray!30] (10,7) to[out=90, in=-115] (12,14);
\draw[line width=1.4pt,gray!30] (12,0) to[out=115, in=-90] (10,7);

\draw[line width=1.4pt]  plot [smooth cycle, tension = 0.7] coordinates
{(11,3)(14,5)(14,9)(11,11)(10,7)};
\draw[line width=1.4pt]  (10,7) -- (9.7,6.7);
\draw[line width=1.4pt]  (10,7) -- (10.3,6.7);

\draw[line width=1.4pt] plot [smooth, tension=0.8] coordinates
{(12,0)(11,2)(9,2)(8,7)(9,12)(11,12)(12,14)};
\draw[line width=1.4pt]  (8,7) -- (7.7,6.7);
\draw[line width=1.4pt]  (8,7) -- (8.3,6.7);

\draw[line width=1.4pt] plot [smooth, tension=0.7] coordinates
{(6,0)(5,2)(3,2)(2,7)(3,12)(5,12)(6,14)};
\draw[line width=1.4pt]  (2,7) -- (1.7,6.7);
\draw[line width=1.4pt]  (2,7) -- (2.3,6.7);

\draw[line width=1.4pt] plot [smooth, tension=0.7] coordinates
{(4,0)(3.5,1.1)(2,1)(2,0)};
\draw[line width=1.4pt]  (2,1) -- (2.1,1.3);
\draw[line width=1.4pt]  (2,1) -- (2.3,0.9);

\draw[line width=1.4pt] plot [smooth, tension=0.7] coordinates
{(4,14)(3.5,12.9)(2,13)(2,14)};
\draw[line width=1.4pt]  (2,13) -- (2.1,12.7);
\draw[line width=1.4pt]  (2,13) -- (2.3,13.1);

\draw[line width=1.4pt] plot [smooth, tension=0.7] coordinates
{(0,0.5)(1.5,2)(0,7)(1.5,12)(0,13.5)};
\draw[line width=1.4pt]  (0,7) -- (-0.3,7.3);
\draw[line width=1.4pt]  (0,7) -- (0.3,7.3);
\draw (6,7) node {\huge\ldots};
\end{tikzpicture}
\end{center}
\caption{Considered fragment and its smoothing.}\label{pic_k-bendings}
\end{figure}
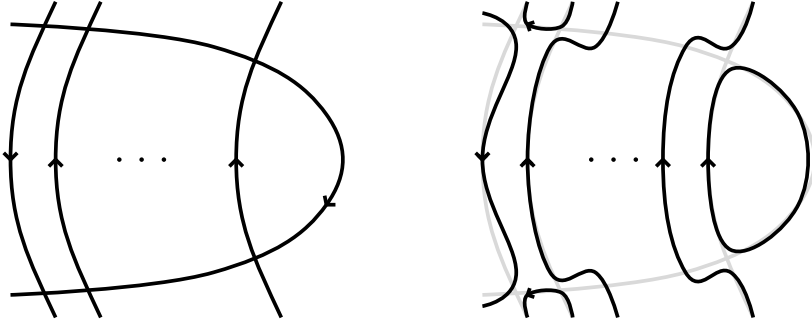
Let $S(D)$ be the result of smoothing of all double points of diagram $D$. Obviously, if $S(D)$ contains fragment described in Figure \ref{pic_k-bendings} on the left, then $S(D)$ contains the fragment described in Figure \ref{pic_k-bendings} on the right. So $S(D)$ cannot be the collection of concentric circles, it is contradict the assumption that $D$ is annular and Lemma \ref{smooth-lemma}.
\end{proof}

Previous Theorem \ref{th_biangles} shows that we can start the sequence from removing all regular biangles then it continue by tightenings before first move $R_1$.

\begin{definition}
The fragment of diagram, shown in Figure \ref{gen_biangle}, is called the generalized biangle.
\begin{figure}[H]
\begin{center}
\begin{tikzpicture}[scale = 0.65]
\draw[line width=1.4pt]  plot [smooth, tension=0.7] coordinates
{(0,0)(1.8,1.3)(1.8,2.7)(0,4)};
\draw[line width=1.4pt]  plot [smooth, tension=0.7] coordinates
{(2,0)(0.2,1.3)(0.2,2.7)(2,4)};

\draw [fill = white] (2-0.4,2-0.3) rectangle (2+0.4,2+0.3);
\draw [fill = white] (0-0.4,2-0.3) rectangle (0+0.4,2+0.3);
\draw (2,2) node {$l$};
\draw (0,2) node {$k$};
\end{tikzpicture}
\end{center}
\caption{Generalized biangle.}\label{gen_biangle}
\end{figure}
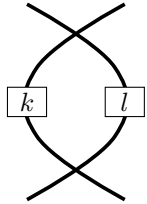
Here mark on ark indicates the number of non-intersecting ``parallel'' arcs that are oriented identically.
\end{definition}

Generalized biangle can be obtained from collection of parallel arcs by different sequences of bendings but we will consider this sequences as special move. Moreover, as we are interested in annular diagrams we will consider special case of sequences defined as following.

\begin{definition}
The sequence of bendigs, creating generalized biangle, will be called a \emph{generalized bending}. We assume that the following conditions are satisfied:
\begin{enumerate}
\item any bending in the generalized bending is applying to arcs of different Seifert circles,
\item all possible bendigs applied to arcs of different Seifert circles are used.
\end{enumerate}
\begin{figure}[H]
\begin{center}
\begin{tikzpicture}[scale = 0.65]
%\draw[help lines] (0,0) grid (9,4);
\draw[line width=1.4pt]  (5.5,0) to[in=-90,out=15] (8.2,2);
\draw[line width=1.4pt]  (8.2,2) to[in=-15,out=90] (5.5,4);
\draw[line width=1.4pt]  (8.5,0) to[in=-90,out=165] (5.8,2);
\draw[line width=1.4pt]  (5.8,2) to[in=195,out=90] (8.5,4);

\draw[line width=1.4pt]  (8.2,2) -- (8,2.25);
\draw[line width=1.4pt]  (8.2,2) -- (8.4,2.25);

\draw[line width=1.4pt]  (5.8,2) -- (5.6,1.75);
\draw[line width=1.4pt]  (5.8,2) -- (6,1.75);

\draw [line width=1.7pt, ->] (3.5,2) -- (5, 2);

\draw[line width=1.4pt]  plot [smooth, tension=0.7] coordinates
{(0,0)(0.3,2)(0,4)};
\draw[line width=1.4pt]  (0.3,2) -- (0.1,2.25);
\draw[line width=1.4pt]  (0.3,2) -- (0.5,2.25);

\draw[line width=1.4pt]  plot [smooth, tension=0.7] coordinates
{(3,0)(2.7,2)(3,4)};
\draw[line width=1.4pt]  (2.7,2) -- (2.9,1.75);
\draw[line width=1.4pt]  (2.7,2) -- (2.5,1.75);

\draw [fill = white] (2.8-0.4,2.9-0.3) rectangle (2.8+0.4,2.9+0.3);
\draw [fill = white] (0.3-0.4,1.3-0.3) rectangle (0.3+0.4,1.3+0.3);
\draw (2.8,2.9) node {$k$};
\draw (0.3,1.3) node {$l$};

\draw [fill = white] (8-0.4,2.95-0.3) rectangle (8+0.4,2.95+0.3);
\draw [fill = white] (6-0.4,1.2-0.3) rectangle (6+0.4,1.2+0.3);
\draw (8,2.95) node {$l$};
\draw (6,1.2) node {$k$};
\end{tikzpicture}
\end{center}
\caption{Generalized bending.}\label{gen_bending}
\end{figure}
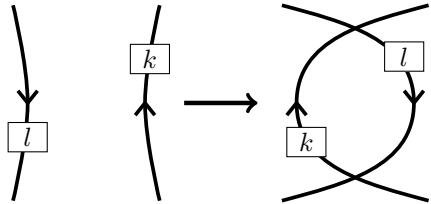
If some of conditions cannot be satisfied we say that generalized bending is not applicable.
The inverse to generalized bending is called generalized tightening.
\end{definition}

In a sense, generalized bending is maximal sequence of bending creating generalized biangle, so the following examples are designed to make this concept more clear.

\begin{example}
In Figure \ref{pic_not__max} we give an example of bending that is not generalized bending, because we have a possible bending for arcs $\alpha$ and $\gamma$.
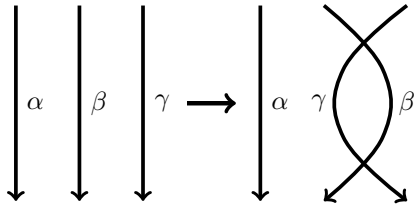
\begin{figure}[H]
\begin{center}
\begin{tikzpicture}[scale = 0.65]
%\draw[help lines] (0,0) grid (8,4);
\draw [line width=1.4pt, ->] (0,4) -- (0,0);
\draw [line width=1.4pt, ->] (1.3,4) -- (1.3,0);
\draw [line width=1.4pt, ->] (2.6,4) -- (2.6,0);
\draw (0.4,2) node {$\alpha$};
\draw (1.7,2) node {$\beta$};
\draw (3,2) node {$\gamma$};
\draw [line width=1.7pt, ->] (3.5,2) -- (4.5,2);
\draw [line width=1.4pt, ->] plot [smooth, tension=0.7] coordinates
{(8,4)(6.7,2.7)(6.7,1.3)(8,0)};
\draw [line width=1.4pt, ->] plot [smooth, tension=0.7] coordinates
{(6.3,4)(6.2+1.3,2.7)(6.2+1.3,1.3)(6.3,0)};
\draw [line width=1.4pt, ->] plot [smooth, tension=0.7] coordinates
{(5,4)(5,0)};
\draw (5.4,2) node {$\alpha$};
\draw (6.2,2) node {$\gamma$};
\draw (8,2) node {$\beta$};
\end{tikzpicture}
\end{center}
\caption{The bending which is not maximal.}\label{pic_not__max}
\end{figure}
\end{example}

\begin{example}
Set of three circles is an example of diagram to which generalized bending is not applicable. In Figure \ref{pic_2circles} we present a result of applying bending to circles $\alpha$ and $\beta$, but there is possible bending for circles $\alpha$ and $\gamma$. But result of applying two bendings is not generalized biangle.
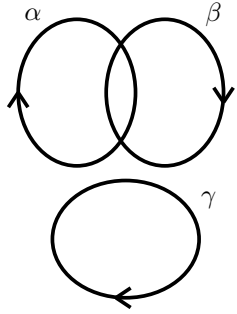
\begin{figure}[H]
\begin{center}
\begin{tikzpicture}[scale = 0.65]
%\draw[help lines] (0,0) grid (5,6);
\draw[line width=1.4pt] (1.2,4.2) circle [x radius=1.2cm, y radius=1.5cm];
\draw[line width=1.4pt] (3,4.2) circle [x radius=1.2cm, y radius=1.5cm];
\draw[line width=1.4pt] (2.2,1.2) circle [x radius=1.5cm, y radius=1.2cm];
\draw (0.3,5.8) node {$\alpha$};
\draw (4,5.8) node {$\beta$};
\draw (3.9,2) node {$\gamma$};

\draw [line width=1.4pt] (0,4.2) -- (-0.2,4.2-0.3);
\draw [line width=1.4pt] (0,4.2) -- (0.2,4.2-0.3);

\draw [line width=1.4pt] (4.2,4) -- (4.2-0.2,4.3);
\draw [line width=1.4pt] (4.2,4) -- (4.2+0.2,4.3);
\draw [line width=1.4pt] (2,0) -- (2+0.3,0.2);
\draw [line width=1.4pt] (2,0) -- (2+0.3,-0.2);

\end{tikzpicture}
\end{center}
\caption{Result of bending of two circles.}\label{pic_2circles}
\end{figure}
\end{example}

\begin{theorem}\label{th_circles_nuber}
Let $D$ be the annular diagram then generalized tightening doesn't change number of Seifert circles. 
\end{theorem}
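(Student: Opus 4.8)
The plan is to compare the smoothing $S(D)$ of $D$ with the smoothing $S(D')$ of the diagram $D'$ obtained from $D$ by the generalized tightening, and to show that the two have the same number of connected components. Since a generalized tightening is, by definition, the inverse of a generalized bending, I would analyse one elementary bending at a time and then iterate. The local model is this: a single bending replaces two adjacent parallel arcs of $D'$ by an irregular biangle; the only data relevant for the global count are the four endpoints of this local picture and the way they are joined to one another through the rest of the diagram, and the latter is the same for $D$ and $D'$. Recall from the proof of Lemma~\ref{lem_biangles} that smoothing the two double points of an \emph{irregular} biangle produces a Seifert circle — the coherently oriented boundary of the bigon — so locally it yields that circle together with two through-arcs joining the four endpoints in the pattern \emph{opposite} to the one produced by the two parallel arcs of $D'$ (there are only two planar matchings of the four endpoints). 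Thus, passing from $D'$ to $D$ adds one closed circle and transposes the reconnection pattern of the four endpoints, the exterior being untouched.

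For the bookkeeping, note that the two arcs that get bent lie on two \emph{different} Seifert circles (condition~(1) in the definition of the generalized bending), so in $D'$ each of them closes up, through the rest of the diagram, into a Seifert circle of its own; this pins down the exterior matching of the four endpoints. For precisely that exterior matching, transposing the local pattern — i.e. passing from $D'$ back to $D$ — fuses those two Seifert circles into a single one. Hence the bending destroys one Seifert circle (the fusion $2\to 1$) while simultaneously creating one (the new bigon boundary), so the total is unchanged; reading this backwards, the corresponding elementary tightening preserves the number of Seifert circles. Running this over all the bendings that compose the generalized bending, and using Lemma~\ref{smooth-lemma} to see that $S(D)$ is a concentric system at each stage so that the required hypotheses persist, would give the theorem.

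Alternatively one may avoid the induction and treat the generalized biangle of Figure~\ref{gen_biangle} in one stroke: its two bundles of $k$ and $l$ oppositely oriented parallel arcs cross each other twice, and smoothing the resulting $2kl$ double points produces a collection of through-arcs on the $2(k+l)$ endpoints together with some number of closed circles. One then glues this local picture into the exterior, which by Lemma~\ref{smooth-lemma} is a concentric system of equally oriented circles, and checks that the number of closed circles created is exactly cancelled by the fusions of Seifert circles that conditions~(1)–(2) in the definition of the generalized bending force to occur.

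I expect the main obstacle to be precisely this cancellation count in the global approach: determining, for the $k\times l$ grid of anti-parallel smoothings taken twice over, both the induced matching of the $2(k+l)$ endpoints and the number of closed circles produced, and then showing that concentricity of $S(D)$ forces the exterior to join the through-strands so that the two quantities balance (the subtlety being that several arcs inside one bundle may belong to the same Seifert circle). A secondary difficulty, present in the inductive route, is to check that the hypothesis ``the arcs being bent lie on different Seifert circles'' stays valid as one peels the bendings off one by one, since the Seifert structure changes at each step; this, together with controlling the regular biangles that might appear (cf. Theorem~\ref{th_biangles}), is where the annularity assumption and Lemma~\ref{smooth-lemma} do real work.
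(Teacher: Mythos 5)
Your main (inductive) route is correct, but it is genuinely different from the paper's argument. You peel off the generalized bending one elementary bending at a time and do local bookkeeping: smoothing an irregular biangle gives the coherently oriented bigon circle plus the transposed reconnection of the four legs, and since condition~(1) in the definition of generalized bending guarantees the two arcs being bent lie on \emph{different} Seifert circles, the transposition fuses those two circles into one while the bigon contributes one new circle, so each step — and hence the whole generalized tightening — is count-neutral. The paper instead argues globally in one stroke: it draws the annular diagram containing the irregular generalized biangle in a normal form built from two twin boxes $B_1,B_2$ on $n$ strands with a $k$-strand bundle wrapping around (Figure~\ref{pic_tighted}), and simply counts Seifert circles before and after the tightening, getting $n+k$ on both sides ($(n-k)+2k$ after), using the fact that smoothing a twin box trivializes it. Your local argument is more elementary and arguably more informative (it shows every single bending to arcs of distinct Seifert circles preserves the count, with no normal form needed), while the paper's count is shorter once the figure is accepted and keeps the annular structure visible. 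Note also that the ``secondary difficulty'' you flag — whether the different-Seifert-circles hypothesis persists as you peel off bendings — is not a real gap: condition~(1) in the definition of generalized bending is imposed on \emph{every} bending in the sequence, so the hypothesis is supplied by the definition at each step; likewise your second, global sketch (the $k\times l$ grid count) is exactly the messy computation the paper's normal form is designed to avoid, so you can safely drop it.
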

\begin{proof}
Annular diagram with irregular generalized biangle is illustrated in Figure \ref{pic_tighted}, here $B_1$ and $B_2$ are some twin diagrams on $n$ strands. So after smoothing of all double points we have $n+k$ Seifert circles.
\begin{figure}[ht!]
\begin{center}
\begin{tikzpicture}[scale = 0.3]
%\draw[help lines] (0,0) grid (9,15);
\draw[line width=1.4pt] (0,12) rectangle (4,10);

\draw[line width=1.4pt] (3,10) to[in=90, out=-90] (5,8) to (5,6);
\draw[line width=1.4pt] (5,10) to[in=90, out=-90] (3,8);
\draw[line width=1.4pt] (0,6) rectangle (4,8);
\draw[line width=1.4pt] (3,6) to[in=90, out=-90] (5,4);
\draw[line width=1.4pt] (5,6) to[in=90, out=-90] (3,4);

\draw [line width=1.4pt] (5,10) to (5,12) 
                                to[out=90,in =180] (6,13)
                                to[out=0,in =90] (7,12) to (7,3)
                                to[out=-90,in =0] (6,2)
                                to[out=180,in =-90] (5,4);

\draw [line width=1.4pt] (3,12) to[out=90,in =180] (5.5,14)
								to[out=0,in=90] (8,12) to (8,3)
                                to[out=-90,in =0] (6,1)
                                to[out=180,in =-90] (3,3) to (3,4);

\draw [line width=1.4pt] (1,10) -- (1,8);
\draw [line width=1.4pt] (1,12) to[out=90,in =180] (5.5,15)
								to[out=0,in =90] (9,12) to (9,3)
                                to[out=-90,in =0] (5.5,0)
                                to[out=180,in =-90] (1,3) to (1,6);
                                
\draw (2,7) node {\large$B_2$};
\draw (2,11) node {\large$B_1$};

\draw [fill = white] (5-0.5,7-0.6) rectangle (5+0.5,7+0.6);
\draw (5,7) node {$k$};
\draw [fill = white] (7-0.5,7-0.6) rectangle (7+0.5,7+0.6);
\draw (7,7) node {$k$};
\end{tikzpicture}
\hspace{4em}
\begin{tikzpicture}[scale = 0.35]
%\draw[help lines] (0,0) grid (7,13);
\draw [line width=1.4pt] (0,10) rectangle (4,8);
\draw (2,9) node {\large$B_1$};
\draw [line width=1.4pt] (3,8) to[out=-90,in=180] (4,7)
						       to[out=0,in=-90] (5,8) to (5,10)
                               to[out=90,in=0] (4,11)
                               to[out=180,in=90] (3,10);

\draw [line width=1.4pt] (0,3) rectangle (4,5);
\draw (2,4) node {\large$B_2$};
\draw [line width=1.4pt] (3,3) to[out=-90,in=180] (4,2)
						       to[out=0,in=-90] (5,4) to (5,5)
                               to[out=90,in=0] (4,6)
                               to[out=180,in=90] (3,5);

\draw [line width=1.4pt] (1,5) -- (1,8);
\draw [line width=1.4pt] (1,3) to[out=-90,in=180] (4,0)
							   to[out=0,in=-90] (7,3) to (7,10)
                               to[out=90,in=0] (4,13)
                               to[out=180,in=90] (1,10);
                               
\draw [fill = white] (5-0.5,4-0.6) rectangle (5+0.5,4+0.6);
\draw [fill = white] (5-0.5,9-0.6) rectangle (5+0.5,9+0.6);
\draw (5,4) node {$k$};
\draw (5,9) node {$k$};
\end{tikzpicture}
\end{center}
\caption{Equivalent diagram before and after tightening.}\label{pic_tighted}
\end{figure}
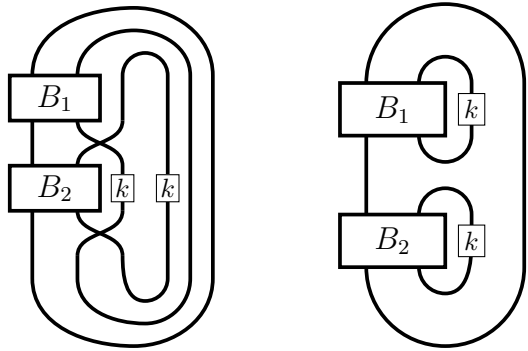
We have $n-k$ Seifert circles which are not involved tightening and $2k$ Seifert circles obtained after it.
\end{proof}

\begin{theorem}\label{th_bend_seq}
Let $D$ be an annular diagram related with minimal diagram by sequence of bendings $B_1, B_2, \ldots, B_k$. Then this sequence can be decomposed into at most two generalized bending.
\end{theorem}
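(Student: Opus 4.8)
The plan is to forget the order in which the bendings $B_1,\dots,B_k$ are performed, read the final answer off the diagram $D$, and only at the end reorganize the given bendings accordingly. First I would record two reductions. Because $D_0$ is a minimal diagram, it contains no biangle at all: a regular or an irregular biangle could be removed by an $R_2$-move, strictly lowering the number of double points, which is impossible by Theorem~\ref{th_kh2}. Hence every double point of $D$ is created by one of the bendings $B_i$. Grouping the bendings according to the connected fragment of $D$ that their double points eventually occupy, I obtain a partition of the sequence into ``clusters''; writing $F_1,F_2,\dots$ for the corresponding connected subfragments of $D$, the diagram $D$ is nothing but $D_0$ with each $F_j$ inserted into a system of disjoint parallel arcs of $D_0$.

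The core of the argument is to show that each fragment $F_j$ is a \emph{generalized biangle}, i.e.\ has exactly the shape of Figure~\ref{gen_biangle}: a bundle of $k$ coherently oriented parallel arcs running through a bundle of $l$ coherently oriented parallel arcs in a bigon pattern. The instrument is the smoothing $S(D)$, which by Lemma~\ref{smooth-lemma} is a family of concentric, coherently oriented circles. I would run through the ways a connected collection of bendings can be stacked — the same sort of case analysis already used in the proof of Theorem~\ref{th_biangles} and illustrated by the examples of Figures~\ref{pic_not__max} and~\ref{pic_2circles} — and check that every deviation from the bundle-over-bundle shape is visible after smoothing: a bending joining a Seifert circle to itself splits or merges a Seifert circle; a three-circle junction of the type of Figure~\ref{pic_2circles}, or a fragment in which some admissible bending between the two bundles is omitted as in Figure~\ref{pic_not__max}, produces a Seifert circle not concentric with the rest, or a Seifert-circle count incompatible with Theorem~\ref{th_circles_nuber}. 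In every case this contradicts the hypothesis that $D$ is annular. So each $F_j$ is a generalized biangle, and at the same time the cluster that produced it satisfies the two requirements in the definition of a generalized bending: condition~(1) holds because, as just noted, each of its bendings must be between arcs of two distinct Seifert circles, and condition~(2) holds because the bundle-over-bundle shape is precisely the statement that no admissible bending between these two bundles has been omitted. Finally, the bendings inside a fixed cluster all create bigons between the same two bundles and may be commuted into the canonical order of Figure~\ref{gen_bending}, while bendings belonging to different clusters commute because they are supported in disjoint regions of $S^2$; this reordering exhibits the sequence $B_1,\dots,B_k$ as a composition of the generalized bendings attached to $F_1,F_2,\dots$.

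It remains to see that there are at most two clusters. A generalized biangle contains an irregular biangle, namely the central bigon bounded by the two innermost arcs of its two bundles, which by construction encloses no further double point; and fragments occupying disjoint regions of $S^2$ contribute distinct irregular biangles to $D$. Since by Lemma~\ref{lem_biangles} an annular diagram has at most two irregular biangles, there are at most two fragments $F_j$, and the sequence therefore decomposes into at most two generalized bendings.

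The step I expect to be the genuine obstacle is the middle one: proving that every connected fragment assembled from an arbitrary set of bendings inside an annular diagram is forced to be a full generalized biangle. This is a rigidity statement for concentric circles, and the work is to show that each possible ``defect'' — a self-bending, a three-circle junction, or an incomplete bundle — is detected by the smoothing $S(D)$, either through loss of concentricity or through the Seifert-circle count governed by Theorem~\ref{th_circles_nuber}. Once this is in place, the commutation and reordering of the $B_i$ in the second paragraph, and the counting in the third, are routine.
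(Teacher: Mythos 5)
Your proposal follows essentially the same route as the paper's proof: rule out bendings between arcs of a single Seifert circle via the concentric smoothing and the Seifert-circle count (Lemma \ref{smooth-lemma}, Theorem \ref{th_circles_nuber}), argue that the completeness conditions in the definition of a generalized bending are forced, and bound the number of generalized bendings by two through the irregular-biangle count of Lemma \ref{lem_biangles}. The clustering and commutation bookkeeping you add, and the case analysis you defer as the ``genuine obstacle,'' correspond exactly to the step the paper dispatches in one sentence (all arcs of an annular diagram are coherently oriented, so no admissible bending can be omitted), so your outline matches the paper's argument in all essentials.
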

\begin{proof}
It is easy to see that bending applied to different Seifert circles doesn't change number of Seifert circles, and bending applied to arcs of the same Seifert circle increases number of Seifert circles. 
According to the Theorem \ref{th_circles_nuber} the number of Seifert circles of $D$ and corresponding minimal diagram are equal, so all $B_i$ are applying to different Seifert circles. All arcs of annular diagram have the same orientation so there is no possible bendings that mean generalized bending is applicable and both of conditions are satisfied.

Any generalized bending creates exact one irregular biangle, so number according the Lemma \ref{lem_biangles} of generalized bendings cannot be greater than two.
\end{proof}
If we consider the sequence of inverses of bendings $B_1, B_2, \ldots, B_k$ from condition of previous theorem, we get the same decomposition for sequence of tightenings.

\begin{theorem}\label{th_gen_bend_uniq}
For any collection of arcs the generalized bending is uniquely determined.
\end{theorem}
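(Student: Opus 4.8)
The plan is to reformulate the statement as a confluence result. Since a generalized bending is, by definition, a \emph{maximal} sequence of admissible bendings — each applied to arcs of two different Seifert circles, all such bendings eventually used — "uniquely determined" means precisely that any two such maximal sequences starting from the given collection of arcs yield the same generalized biangle, up to isotopy (and the circle-shift move of Figure~\ref{pic_circle-shift}, in the spirit of Theorem~\ref{th_kh2}). I would first fix the combinatorial data attached to a collection of arcs: the linear order in which the arcs are met along a transversal, together with the assignment to each arc of the Seifert circle containing it. Because in the situation of interest all arcs carry the same orientation (Lemma~\ref{smooth-lemma}), a single admissible bending is completely determined by the unordered pair of arcs it links, and the fragment it produces — exactly one irregular biangle, as in Figure~\ref{gen_bending} — is determined up to isotopy by that pair together with the arcs lying between them.

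Next I would establish the local step (a diamond lemma). Suppose two distinct admissible bendings are available on a collection of arcs. If the two pairs of arcs are disjoint and unnested, the two biangles are created in disjoint subregions, so the moves plainly commute up to isotopy. If the pairs share an arc or are nested, one uses the fact that two arcs of the \emph{same} Seifert circle are never bent: after performing either bending, the new "parallel" strands appearing inside the biangle still split into exactly the same Seifert classes as before, so the set of admissible bendings still to be performed is unchanged (with the obvious relabeling), and none of them becomes unavailable except by being performed. Carrying this out in each configuration shows that performing the two bendings in either order gives isotopic diagrams, and that an admissible bending is never destroyed — only executed. Combined with termination (the number of pairs of arcs of distinct Seifert circles is finite and each bending strictly decreases the number of outstanding ones; alternatively invoke Lemma~\ref{lem_biangles} and Theorem~\ref{th_bend_seq} to bound the process), Newman's lemma then forces the maximal sequence to be unique up to isotopy, which is the theorem.

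As an alternative, more constructive route I would instead write down the normal form directly: since the Seifert circles of an annular diagram are concentric (Lemma~\ref{smooth-lemma}) they carry a canonical nesting order, and I would argue that in the resulting generalized biangle of Figure~\ref{gen_biangle} the arcs on each of the two sides, and hence the two labels $k$ and $l$, are dictated by this order; maximality forbids any further bending and condition~(2) of the definition forbids any admissible bending from being "missing," so this configuration is the only possible output. Either way, the main obstacle I expect is the non-disjoint case of the local step: making precise what "the same remaining bending, after one bending has been performed" means, and checking that nested or overlapping bendings genuinely commute up to \emph{isotopy} (not merely up to equivalence), all while verifying that condition~(1) — bending only arcs of different Seifert circles — is never violated at any intermediate stage.
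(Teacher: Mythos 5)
Your proposal is correct in substance and rests on the same key observation that powers the paper's proof --- performing one admissible bending never destroys the applicability of another, so the maximality condition (2) in the definition of generalized bending pins down the outcome --- but you package it genuinely differently. The paper argues directly at the level of whole collections of arcs: it supposes two generalized bendings are available for $\alpha$, one with $\beta$ and one with $\gamma$, applies the one with $\beta$ (Figure \ref{gen_bend}), notes that the $R_2$-move involving $\gamma$ remains applicable, and concludes from condition (2) that $\gamma$ must consist of arcs of the same Seifert circles as $\beta$, so the two candidate generalized bendings coincide; order-independence of the constituent bendings is left implicit. Your rewriting-theoretic route (local confluence of individual bendings plus termination, then Newman's lemma) makes that order-independence explicit, which is a real gain in rigor, but at the cost of having to verify the nested/overlapping cases of the diamond --- the obstacle you flag yourself --- which the paper never confronts because it never descends to individual bendings. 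Your alternative normal-form argument, reading the labels $k$ and $l$ of the generalized biangle of Figure \ref{gen_biangle} off the concentric order of the Seifert circles given by Lemma \ref{smooth-lemma}, is also viable and arguably closest to the geometric content. One caveat: your termination count (``each bending strictly decreases the number of outstanding pairs'') tacitly assumes each pair of arcs is bent at most once, which is justified only because the target of a generalized bending is constrained to be the generalized biangle pattern; anchor termination to that shape, or to Lemma \ref{lem_biangles} and Theorem \ref{th_bend_seq} as you suggest, rather than to the abstract count alone.
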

\begin{proof}
Let $\alpha$, $\beta$ and $\gamma$ be collections of equally oriented arcs. Suppose that there are two generalized bendings for $\alpha$ with $\beta$ and $\gamma$. Apply generalized bendings for $\alpha$ and $\beta$, see Figure \ref{gen_bend}.

\begin{figure}[ht!]
\begin{center}
\begin{tikzpicture}[scale = 0.7]
%\draw[help lines] (0,0) grid (10,5);
\draw[line width=1.4pt] plot [smooth, tension=0.7] coordinates
{(0,1)(0.5,3)(0,5)};
\draw[line width=1.4pt] plot [smooth, tension=0.7] coordinates
{(3.5,1)(3,3)(3.5,5)};
\draw[line width=1.4pt] plot [smooth, tension=0.7] coordinates
{(0,0)(2,0.5)(4,0)};

\draw[line width=1.4pt] (2,0.5) -- (2-0.2,0.5+0.15);
\draw[line width=1.4pt] (2,0.5) -- (2-0.2,0.5-0.15);
\draw[line width=1.4pt] (0.5,3) -- (0.5-0.2,3+0.3);
\draw[line width=1.4pt] (0.5,3) -- (0.5+0.15,3+0.3);
\draw[line width=1.4pt] (3,3) -- (3-0.15,3-0.3);
\draw[line width=1.4pt] (3,3) -- (3+0.2,3-0.3);

\draw (0.7,4.5) node {$\alpha$};
\draw (3.7,4.5) node {$\beta$};
\draw (2,1) node {$\gamma$};

\draw[line width=1.4pt] plot [smooth, tension=1.5] coordinates
{(6,1)(9,3)(6,5)};
\draw[line width=1.4pt] plot [smooth, tension=1.5] coordinates
{(9.5,1)(6.5,3)(9.5,5)};
\draw[line width=1.4pt] plot [smooth, tension=0.7] coordinates
{(6,0)(8,0.5)(10,0)};

\draw[line width=1.4pt] (8,0.5) -- (8-0.2,0.5+0.15);
\draw[line width=1.4pt] (8,0.5) -- (8-0.2,0.5-0.15);

\draw[line width=1.4pt] (6.5,3) -- (6.5-0.17,3-0.3);
\draw[line width=1.4pt] (6.5,3) -- (6.5+0.2,3-0.3);
\draw[line width=1.4pt] (9,3) -- (9-0.17,3+0.3);
\draw[line width=1.4pt] (9,3) -- (9+0.2,3+0.3);

\draw (6.5,4.5) node {$\alpha$};
\draw (9.5,4.5) node {$\beta$};
\draw (8,0.9) node {$\gamma$};
\draw [line width=1.7pt, ->] (4.5,2.5) -- (5.5, 2.5);
\end{tikzpicture}
\end{center}
\caption{Applying one of two possible bendings.}\label{gen_bend}
\end{figure}
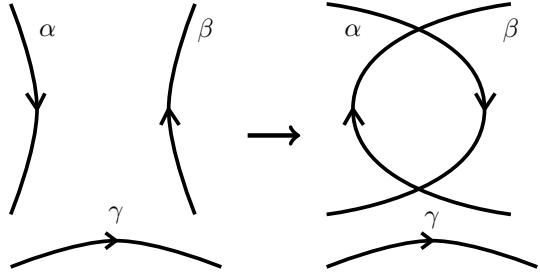
After this bending for $\beta$ and $\gamma$ move $R_2$ remains applicable, if generalized bending for $\alpha$ and $\beta$ is applicable $\gamma$ has arcs of the same Seifert circles as $\beta$. So generalized bending for $\alpha$ and $\gamma$ is generalized bending for $\alpha$ and $\beta$.
\end{proof}

\begin{theorem}\label{th_uniq_annular}
Let $D_1$ and $D_2$ be the equivalent annular diagrams related with the same minimal digram by generalized bendings, then $D_1$ and $D_2$ are related by circles shift.
\end{theorem}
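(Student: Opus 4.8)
The plan is to combine the uniqueness of generalized bendings (Theorem~\ref{th_gen_bend_uniq}) with the concentricity of Seifert circles of annular diagrams (Lemma~\ref{smooth-lemma}). Since $M$ is annular (it is carried to itself from the annular diagram $D_1$ by generalized tightenings, cf.\ Figure~\ref{pic_tighted}), its Seifert circles form a concentric family $C_1,\dots,C_m$ ordered by nesting, and the two disks of $S^2$ complementary to the annular neighbourhood carrying this family are $\mathrm{int}(C_1)$ and $\mathrm{ext}(C_m)$. By Theorem~\ref{th_bend_seq} each $D_i$ is obtained from $M$ by at most two generalized bendings $G^{(i)}_1,G^{(i)}_2$. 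The first step is to show that a generalized bending applied to a concentric family necessarily has its two bundles (the $k$- and $l$-marked arcs of Figure~\ref{gen_biangle}) equal to consecutive blocks of $C_1,\dots,C_m$, with the resulting generalized biangle sitting in one of the two complementary disks; and that, since each generalized bending produces exactly one irregular biangle, the two generalized bendings occupy the two different complementary disks (here Lemma~\ref{lem_biangles} and the argument of Theorem~\ref{th_biangles} apply). Consequently $D_i$ is determined by $M$ together with a pair of nonnegative integers $(p_i,q_i)$ recording, at each complementary disk, how many of the nearest circles are swallowed by the corresponding generalized biangle.

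Next I would recover $(p_i,q_i)$ from the equivalence class. Reading the sequence backwards, the remark after Theorem~\ref{th_bend_seq} gives that $D_i$ is carried to $M$ by the two generalized tightenings inverse to $G^{(i)}_1,G^{(i)}_2$; by Theorem~\ref{th_circles_nuber} these do not change the number of Seifert circles, and by Lemma~\ref{lem_biangles} and Theorem~\ref{th_biangles} each of them is applied to one of the (at most two) irregular biangles of $D_i$. A generalized tightening is the inverse of a uniquely determined generalized bending (Theorem~\ref{th_gen_bend_uniq}), hence is itself determined by the biangle it removes, so the only freedom in relating $D_i$ to $M$ is the choice of representative of $M$ inside its doodle class. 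By Theorem~\ref{th_kh2} two such representatives differ by circle shifts, and a circle shift of the concentric family is exactly what reindexes the bookkeeping pair $(p_i,q_i)$ (moving a component across the point at infinity exchanges the roles of the two complementary disks). Therefore $(p_1,q_1)$ and $(p_2,q_2)$ are related by circle shifts of $M$; reconstructing $D_i$ from $M$ and $(p_i,q_i)$, which is unambiguous by Theorem~\ref{th_gen_bend_uniq}, shows that $D_1$ and $D_2$ are related by circle shifts as well.

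The step I expect to be the main obstacle is the geometric claim in the first paragraph: that every maximal sequence of elementary bendings between distinct Seifert circles of a concentric family produces a generalized biangle whose two bundles are consecutive blocks located at a complementary disk, and that at most two such biangles can coexist. This requires analysing which bendings remain ``possible'' after earlier ones have been performed, using that smoothing the final diagram must again return a concentric family (Lemma~\ref{smooth-lemma}); the proof of Theorem~\ref{th_biangles} treats a closely related situation and should adapt. A secondary point needing care is the degenerate cases — one or both generalized bendings absent, or $p_i$ or $q_i$ equal to zero — and checking in each case that the reindexing induced on $(p_i,q_i)$ by the move of Figure~\ref{pic_circle-shift} is realized by a genuine circle shift.
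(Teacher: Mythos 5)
Your argument is built on a premise the paper explicitly rules out: in the first paragraph you assert that the minimal diagram $M$ is annular ``since it is carried to it from the annular diagram $D_1$ by generalized tightenings,'' and from this you deduce that the Seifert circles of $M$ form a concentric family $C_1,\dots,C_m$. Tightening does not preserve annularity --- the paper states after Theorem~\ref{th_uniq_annular} that the result of tightening an annular diagram is not annular --- and the concentricity claim fails concretely: after a generalized tightening as in Figure~\ref{pic_tighted}, smoothing the resulting diagram gives $n-k$ circles that enclose two \emph{disjoint} nests of $k$ circles each (one attached to $B_1$, one to $B_2$), and such a family cannot be deformed to the closure of the trivial twin. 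Lemma~\ref{smooth-lemma} applies to the annular diagrams $D_i$, not to the minimal diagram. Consequently the whole combinatorial skeleton of your proof --- the bundles of a generalized bending being ``consecutive blocks'' of a concentric family of $M$, the biangles sitting in the two complementary disks of that family, and the parametrization of $D_i$ by a pair $(p_i,q_i)$ --- has no foundation, and the reconstruction step at the end inherits this gap.

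A second, independent problem is that the hypothesis that $D_1$ and $D_2$ are \emph{equivalent} is never genuinely used: your second paragraph only argues that each $D_i$ separately determines its tightenings down to a representative of $M$, and then jumps to ``therefore $(p_1,q_1)$ and $(p_2,q_2)$ are related by circle shifts,'' which is exactly the point that needs proof (a priori the two annular diagrams could be built over the same minimal diagram by generalized bendings applied to different collections of arcs). The paper's own proof takes a much shorter route: it invokes Theorem~\ref{th_gen_bend_uniq}, which says the generalized bending is uniquely determined (independently of the choice of partner arcs), so over a fixed minimal diagram the resulting annular diagram is forced, and the only remaining indeterminacy is that the minimal diagram itself is defined only up to the circles shift of Theorem~\ref{th_kh2}. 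If you want to keep your bookkeeping picture, it should be set up on the concentric Seifert circles of the annular diagrams $D_i$ (where Lemma~\ref{lem_biangles} and Theorem~\ref{th_biangles} do apply) rather than on the minimal diagram, and you must then explain how the doodle equivalence of $D_1$ and $D_2$ forces the two data sets to agree.
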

\begin{proof}
Let $D_0$ be the minimal diagram corresponding to $D_1$ and $D_2$. As $D_1$ and $D_2$ are results of application the generalized bendings to arcs of $D_0$, according to the Theorem \ref{th_gen_bend_uniq} for any collections of arcs the generalized bending applies uniquely, so $D_1$ and $D_2$ will be equivalent up circles shift(cause minimal diagram defined up to circles shift).
\end{proof}
It is obvious that result of tightening of annular diagram is not annular diagram. Annular diagram related with minimal digram by generalized bending has minimal number of double points in class of annular diagrams so we will call such diagrams \emph{minimal annular} diagram. The previous Theorem \ref{th_uniq_annular} shows that minimal annular diagram is unique up to circles shift.

\section{Twins equivalence and Markov theorem}\label{sec-4}

\begin{definition}
For any $n$ and $\beta \in TW_n$, $I \in TW_1$, define the following moves:
\begin{itemize}
\addtolength{\itemindent}{5mm}
\item [$M_1:$] $\beta \otimes I \leftrightarrow I \otimes \beta$,
\item [$M_2:$] $\beta \rightarrow \alpha \beta \alpha^{-1}$,
\item [$M_3:$] $\beta \rightarrow (I \otimes \beta) 
					s_1 s_2 \ldots s_{i-1} s_i s_{i-1} \ldots s_2 s_1$,
\item [$M_4:$] $\beta \rightarrow (\beta \otimes I) 
					s_n s_{n-1} \ldots s_{i+1} s_i s_{i+1} \ldots s_{n-1} s_n$,
\end{itemize}
here $\alpha \in TW_n$, $s_i \in TW_{n+1}, i = 1,\ldots, n$.
\\
Two twins are \emph{$M$-equivalent} if they are related by sequence of moves $M_1, M_2, M_3, M_4$ and its inverses.
\end{definition}
The proof of following lemma is obvious.
\begin{lemma}\label{th_minimal_twin}
Let $D$ be minimal annular diagram that corresponds to twins $\beta$ and $\alpha$. Then $\beta$ and $\alpha$ are equivalent up to moves $M_1$ and $M_2$.
\end{lemma}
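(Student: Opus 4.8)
The plan is to recover $\alpha$ and $\beta$ from the common diagram $D$ by the standard ``cut the closure open'' procedure and to identify the only two sources of non-uniqueness in that procedure. Recall that an annular diagram carries a distinguished family of $n$ coherently oriented concentric Seifert circles (Lemma \ref{smooth-lemma}), and that a twin diagram with closure $D$ is obtained by choosing a simple arc $\gamma$ on $S^2$ meeting $D$ transversally exactly once on each Seifert circle, with all double points of $D$ lying off $\gamma$; cutting $S^2$ along a regular neighbourhood of $\gamma$ and straightening the result out produces a twin diagram on $n$ strands whose closure is $D$. So ``$D$ corresponds to $\beta$'' means $\beta$ is read off from $D$ by some such cutting arc, and the claim is exactly that two cutting arcs for the same minimal annular $D$ give twins related by $M_1$ and $M_2$.

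First I would deal with the ambiguity of $\gamma$ inside a disc. Two cutting arcs that are isotopic through cutting arcs within a disc on $S^2$ differ by a finite chain of elementary moves, each pushing $\gamma$ across a single double point of $D$; one such move replaces the twin $\beta$ by $s_i\beta s_i$, an instance of $M_2$. Since $M_2$ allows conjugation by an arbitrary element of $TW_n$, the whole chain amounts to a single application of $M_2$: any two cutting arcs related by a planar isotopy yield conjugate, hence $M_2$-equivalent, twins.

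Next I would treat the genuinely spherical part. Because $D$ is a \emph{minimal} annular diagram, Theorems \ref{th_kh2} and \ref{th_uniq_annular} say that $D$ is rigid apart from circle shifts, so the only remaining freedom in the choice of $\gamma$ is to let it pass through the point at infinity, equivalently to interchange the innermost and outermost complementary discs of the concentric Seifert family. When such a circle shift is actually available the circle being moved bounds a disc containing no other part of the diagram, so in a twin closure it is the closure of a trivial strand; carrying it around the sphere from one end to the other is precisely the move $\beta\otimes I\leftrightarrow I\otimes\beta$, i.e.\ $M_1$ (possibly after an $M_2$ bringing that strand to an end). If no such shift is available, $D$ is rigid and $M_2$ alone suffices. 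Assembling the two cases, $\alpha$ and $\beta$ are related by a finite sequence of $M_1$'s and $M_2$'s.

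The step I expect to be the main obstacle is the third one: making rigorous that, after quotienting out conjugation, the only residual ambiguity of a minimal annular diagram is the circle shift, and that on the twin side this shift is realised \emph{exactly} by $M_1$ rather than by some more elaborate reshuffling of strands. This is where minimality is essential — it forbids ``hidden'' reorderings that would otherwise need to be undone — and where one must check carefully that a Seifert circle bounding a crossing-free disc on one side is the closure of a trivial strand, so that sliding it around $\infty$ is literally $\beta\otimes I\leftrightarrow I\otimes\beta$.
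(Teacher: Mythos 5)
The paper offers no argument at all for this lemma (it is declared obvious), so there is no official proof to compare with; your cut-the-closure-open strategy is the natural way to substantiate it, and its first half is sound: for a fixed annular $D$, pushing a cutting arc across a double point changes the twin read off by conjugation by a generator, and chains of such pushes are absorbed into a single $M_2$. But two points are genuine gaps, and they sit exactly where you flag trouble. First, the claim that \emph{any} simple arc meeting each Seifert circle once (and missing the double points) cuts $D$ open into a twin diagram is not automatic: one must argue that the cut strands can be made monotone, which uses the fact that $D$ may be isotoped to standard closed-twin position, where every arc of $D$ advances monotonically in the angular coordinate because the Seifert circles are coherently oriented (Lemma \ref{smooth-lemma}) and every double point joins adjacent circles; without this the ``straightening'' step, and even the absence of closed or self-crossing strands after cutting, is unproved.

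Second, and more seriously, the $M_1$ step is misidentified. For a fixed oriented $D$ there is no freedom to ``interchange the innermost and outermost complementary discs'': the coherent orientation of the Seifert circles distinguishes the two discs (one lies to the left of every circle, the other to the right), so no isotopy of $S^2$, and no choice of cutting arc, realises that interchange — indeed $\widehat{s_1}$ and $\widehat{s_2}$ in $TW_3$ are \emph{not} isotopic diagrams on $S^2$, only circle-shift equivalent, which is exactly why Theorem \ref{th_kh2} states uniqueness only up to the move of Figure \ref{pic_circle-shift}. Consequently, if ``corresponds to'' means $D$ is literally isotopic to both closures, your first half already gives $M_2$-equivalence and the $\infty$-discussion is vacuous; whereas in the way the lemma is actually used in Theorem \ref{th_markov}, the two twins close to minimal annular diagrams that agree only up to circle shifts (Theorem \ref{th_uniq_annular}), and the circle shift is a doodle move, not an isotopy, so it cannot be recovered as a residual choice of cutting arc on one fixed $D$. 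That case has to be treated separately: a shiftable circle is a crossing-free Seifert circle adjacent to one of the two extreme discs, hence the closure of a trivial strand at an extreme position, and transporting it to the other extreme position changes the twin by $\beta\otimes I\leftrightarrow I\otimes\beta$ (after an $M_2$ if needed). Your sketch gestures at this correctly, but deducing it from ``rigidity of $D$ plus passing $\gamma$ through the point at infinity'' is a non sequitur; the rigidity of the minimal diagram constrains $D$, not the cutting arc, and minimality plays no role in the conjugation half of the argument at all.
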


\begin{lemma}
Let $\beta$ be a twin then $\widehat{M_i(\beta)} = \widehat{\beta}$ for any $i = 1,2,3,4$.
\end{lemma}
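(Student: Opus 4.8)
The plan is to verify each of the four moves separately by exhibiting an isotopy of $S^2$ (together with, where needed, moves $R_2$ on the closure) that carries $\widehat{M_i(\beta)}$ to $\widehat{\beta}$. The underlying principle is that closure glues $P_i$ to $Q_i$ by simple nested arcs on the sphere, so any modification of the twin diagram that can be ``absorbed'' into these connecting arcs leaves the closure unchanged up to isotopy.

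First I would handle $M_1$. The twin $\beta\otimes I$ has an extra trivial strand on the left; $I\otimes\beta$ has it on the right. In the closure, that trivial strand together with its connecting arc forms a simple closed curve disjoint from the rest of the diagram, which can be slid around the sphere from the leftmost to the rightmost position by an isotopy of $S^2$; this is precisely the circles-shift transformation of Figure \ref{pic_circle-shift}. Hence $\widehat{\beta\otimes I}=\widehat{I\otimes\beta}$.

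Next, $M_2$: conjugation $\beta\mapsto\alpha\beta\alpha^{-1}$. Here I would use that in the product $\alpha\beta\alpha^{-1}$ the two copies $\alpha$ and $\alpha^{-1}$ sit at the top and bottom of the braid box, and in the closure the connecting arcs run from the bottom of $\alpha^{-1}$ around to the top of $\alpha$. One then slides the block $\alpha$ upward along these connecting arcs and around the sphere until it meets $\alpha^{-1}$; since $\alpha^{-1}$ is the inverse twin, the concatenation $\alpha\alpha^{-1}$ reduces to the trivial twin by moves $R_2$ (this is exactly the group relation in $TW_n$, which is generated by involutions). This is the standard ``conjugation is invisible to the trace'' argument, and for doodles it goes through because only $R_2$ moves are needed. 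I expect this to be the main obstacle, since one must check carefully that dragging $\alpha$ around the back of the sphere does not create triple points and that the cancellation uses only $R_2$; writing it as: $\widehat{\alpha\beta\alpha^{-1}}$ is isotopic on $S^2$ to $\widehat{\alpha^{-1}\alpha\beta}$, and $\alpha^{-1}\alpha\beta$ is equivalent to $\beta$ as a twin, hence has the same closure.

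Finally, $M_3$ and $M_4$ are the stabilization moves: $M_3$ adds a strand on the left and post-composes with $s_1s_2\cdots s_{i-1}s_is_{i-1}\cdots s_2s_1\in TW_{n+1}$, and $M_4$ is the mirror on the right. Geometrically the word $s_1\cdots s_{i-1}s_is_{i-1}\cdots s_1$ is a single strand that crosses over strands $1,\dots,i$ and comes back; in the closure of $(I\otimes\beta)\cdot(s_1\cdots s_is_{i-1}\cdots s_1)$ this new strand together with its connecting arc bounds a disk meeting the rest of the diagram in an arc, so the whole ``finger'' can be retracted by an isotopy of $S^2$, removing the new strand entirely and leaving $\widehat\beta$. (If one prefers to stay with diagram moves, the retraction is a cascade of $R_2$ moves pulling the crossings off one at a time, which is legitimate since the connecting arc is free to move on the sphere.) $M_4$ is identical after the left-right reflection handled already by $M_1$/circles shift. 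Since each $M_i$ and each of its inverses preserves $\widehat{\;\cdot\;}$, the lemma follows.
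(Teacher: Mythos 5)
Your overall strategy (identify $M_1$ with the circles shift, treat $M_2$ by sliding a block around the closure and cancelling $\alpha^{-1}\alpha$, and treat $M_3$, $M_4$ as the removal of a finger created by $s_1\cdots s_{i-1}s_is_{i-1}\cdots s_1$) is the same as the paper's, and your $M_2$ argument is correct. But there is a genuine flaw in how you justify $M_3$ and $M_4$: you claim the new strand together with its connecting arc ``bounds a disk meeting the rest of the diagram in an arc'' and can be ``retracted by an isotopy of $S^2$.'' An isotopy of $S^2$ preserves the number of double points, and $\widehat{M_3(\beta)}$ has $2i-1$ more double points than $\widehat{\beta}$ (the new strand crosses strands $2,\dots,i$ twice each and has one further crossing coming from the middle letter $s_i$), so no isotopy can remove the finger. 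Your parenthetical fallback ``a cascade of $R_2$ moves'' is also not enough: after closure, the arc leaving the bottom of the word at position $1$ is joined by the closing arc to the added trivial strand, so the crossing produced by the middle $s_i$ becomes a \emph{self-crossing} of a single component. That crossing cannot be removed by $R_2$; it requires one $R_1$ move. This is exactly the point the paper's proof records ($M_3$ and $M_4$ are a composition of $R_1$ with some bendings), and omitting it misstates the content of the stabilization moves, even though the final conclusion survives because doodle equivalence allows $R_1$.

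A smaller inaccuracy of the same kind occurs in your treatment of $M_1$: the extra circle lies in one complementary disk of the annular diagram for $\widehat{\beta\otimes I}$ and in the other disk for $\widehat{I\otimes\beta}$, and in general no isotopy of $S^2$ carries one configuration to the other --- this is precisely why the circles shift of Figure \ref{pic_circle-shift} appears as an extra move in the uniqueness statement for minimal diagrams. The correct justification is that the circles shift is realized by a pair of $R_2$ moves (push the circle across the adjacent strand, creating and then removing two double points), which again is legitimate for doodle equivalence but is not an isotopy. So the lemma does follow along your lines once ``isotopy'' is replaced by ``$R_1$ and $R_2$ moves together with isotopy,'' with the $R_1$ move explicitly accounted for in $M_3$ and $M_4$.
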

\begin{proof}
Obviously, it is only worth mentioning that the moves $M_3$ and $M_4$ are the composition of $R_1$ and some number of bendings, $M_1$ correspond to circles shift.
\end{proof}

\begin{theorem}\label{th_markov}
Any two twins with equivalent closures are $M$-equivalent.
\end{theorem}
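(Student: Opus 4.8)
\emph{Proof plan.} Let $\beta_1,\beta_2$ be twins with equivalent closures and write $D$ for the doodle $\widehat{\beta_1}=\widehat{\beta_2}$. By Theorem~\ref{th_kh2} the doodle $D$ has a minimal diagram $D_0$, unique up to the circles shift of Figure~\ref{pic_circle-shift}, and $D_0$ is reachable from either $\widehat{\beta_i}$ by double‑point‑decreasing moves. I would proceed in three steps: (i) reduce each annular diagram $\widehat{\beta_i}$, staying inside the class of annular diagrams, to the minimal annular diagram $A_i$ lying over $D_0$, checking that every reduction step lifts to one of $M_1,\dots,M_4$ (or to an equivalence of twins); (ii) compare $A_1$ and $A_2$ using uniqueness of the minimal annular diagram; (iii) conclude via Lemma~\ref{th_minimal_twin}.

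For step (i) I would start from the decreasing sequence $\widehat{\beta_i}\rightsquigarrow D_0$ supplied by Theorem~\ref{th_kh2}. Using Theorem~\ref{th_biangles} and the remark following it, I would rearrange it so that all regular biangles are removed first; each such removal is an $R_2$ between twin diagrams, hence changes the twin only by a defining relation of $TW_n$ or a commutation, i.e. a twin equivalence. What remains is a sequence of tightenings and monogon removals. A monogon killed by $R_1$ can, after an isotopy of $S^2$, be placed next to the closure band together with the tightenings that undo its adjacent bendings, and such a block is exactly an inverse of $M_3$ or $M_4$ (recall that $M_3,M_4$ are an $R_1$ composed with bendings). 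The leftover tightenings package, by Theorem~\ref{th_bend_seq} read in the tightening direction, into at most two generalized tightenings; by Figure~\ref{pic_tighted} and Theorem~\ref{th_circles_nuber} a generalized tightening sends an annular diagram to an annular diagram with the same number of Seifert circles, and performing all but the final one (the one landing on $D_0$) yields an annular diagram $A_i$ that is the minimal annular diagram over $D_0$. Since a generalized tightening is the inverse of a generalized bending, it too is a composition of $M$-moves. Hence $\beta_i$ is $M$-equivalent to the twin $\beta_i'$ with $\widehat{\beta_i'}=A_i$.

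For steps (ii) and (iii): $A_1$ and $A_2$ are minimal annular diagrams of the same doodle, so each is $D_0$ with a generalized bending attached, and that bending is unique by Theorem~\ref{th_gen_bend_uniq}; since $D_0$ itself is determined only up to circles shift, Theorem~\ref{th_uniq_annular} shows that $A_1$ and $A_2$ differ by circles shifts, which lift to $M_1$. By Lemma~\ref{th_minimal_twin}, twins sharing a minimal annular diagram are related by $M_1$ and $M_2$. Therefore $\beta_1'\sim_M\beta_2'$, and chaining gives $\beta_1\sim_M\beta_1'\sim_M\beta_2'\sim_M\beta_2$, which is the assertion.

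The hard part is step (i): one must make rigorous that the reduction $\widehat{\beta_i}\rightsquigarrow A_i$ can actually be carried out through annular diagrams and that every elementary move along the way is an inverse of one of $M_1,\dots,M_4$ or a twin equivalence, rather than an unconstrained local move of a doodle diagram. This needs careful bookkeeping — via isotopies of $S^2$ — of where each biangle and monogon sits relative to the Seifert circles, so that $R_1$'s become $M_3^{\pm1}/M_4^{\pm1}$ and maximal admissible blocks of tightenings become generalized tightenings. The rigidity in Theorems~\ref{th_gen_bend_uniq} and~\ref{th_uniq_annular} is exactly what should permit this, and the cleanest organization is probably an induction on the number of double points of $\widehat{\beta_1}$ together with that of $\widehat{\beta_2}$, the base case being two minimal annular diagrams.
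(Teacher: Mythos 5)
Your plan is essentially the paper's proof: reduce each twin, by induction on the excess number of double points over the minimal annular diagram, to the common twin $\Delta$ whose closure is that minimal annular diagram, handling regular biangles by $R_2$ (a twin equivalence together with $M_2$), loops together with their adjacent tightenings as inverses of $M_3$ or $M_4$, and concluding with Theorem~\ref{th_uniq_annular} and Lemma~\ref{th_minimal_twin}. The supporting machinery you invoke (Theorems~\ref{th_biangles}, \ref{th_bend_seq}, \ref{th_circles_nuber}, \ref{th_gen_bend_uniq}) is exactly what the paper uses, and your suggested induction on double points is how the paper organizes step (i).

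One step of your step (i) does not survive scrutiny as stated: the claim that a ``leftover'' generalized tightening, not paired with an $R_1$, ``is a composition of $M$-moves'' because it is the inverse of a generalized bending. The moves $M_3,M_4$ are an $R_1$ \emph{composed} with bendings; a generalized tightening by itself is not of this form, and indeed the paper notes just before Section~\ref{sec-4} that tightening an annular diagram generally destroys annularity, so such a move need not even stay in the class of twin closures, let alone lift to $M$-moves. The paper's proof never needs this: in the case where no regular biangle and no loop is present, it takes the reducing sequence to the minimal diagram, lets $R^j$ be the \emph{first} $R_1$, uses Theorems~\ref{th_biangles} and~\ref{th_bend_seq} to see that $R^1,\ldots,R^{j-1}$ form a generalized tightening, and absorbs that whole block together with $R^j$ into a single inverse of $M_3$ or $M_4$; if no $R_1$ occurs at all, the diagram is already the minimal annular one and the base case applies. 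Replacing your ``standalone generalized tightening'' step by this pairing (always bundle the generalized tightening with the $R_1$ that follows it) closes the gap and turns your outline into the paper's argument.
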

\begin{proof}
We will prove that any two twins with equivalent closures share a common twin in $M$-equivalence classes.

Consider some twin $\alpha$. Let $d$ be the twin diagram such that $\widehat{d}$ is minimal annular diagram of $\widehat{\alpha}$. Let $\Delta$ be the twin presented by $d$. According to the Theorem \ref{th_uniq_annular} the minimal annular diagram is unique up to circles shift thus $\Delta$ doesn't depend on the choice $\alpha$. We will prove that twins $\alpha$ and $\Delta$ are $M$-equivalent.

Fix some diagram $a$ of $\alpha$. Define $\big| a\big|$ as the difference between the number of double points of $a$ and $d$.

\begin{enumerate}
\item Base case.\\
If $\big|a\big| = 0$, according the Lemma \ref{th_minimal_twin} $\alpha$ related with $\Delta$ by moves $M_1$ and $M_2$.
Assume theorem statement holds for $\big|a\big| \le N$.
\item Inductive step.\\
Assume $\big|a\big| = N+1$.

If $\widehat{a}$ contains a regular beangle, we can apply $R_2$ to it. 
Obtained diagram is annular and obtained by equivalence of twin diagrams and $M_2$, that reduces number of double points.\\

If $\widehat{\alpha}$ contains a loop, applying the inverses $M_3$ or $M_4$ reduce the number of double points and diagram remains the to be annular.\\

Suppose $\widehat{\alpha}$ doesn't contains a loop and regular beangles.

Consider sequence $R^1, \ldots, R^k$ of moves $R_1, R_2$, relating $\widehat{\alpha}$ with minimal diagram.
Let $j = 2,\ldots, k$, such that $R^j$ is first move of type $R_1$, according the Theorem \ref{th_biangles} all $R^1, \ldots, R^{j-1}$ are tightenings, moreover, according to Theorem \ref{th_bend_seq} we can assume this sequence as generalized tigthening. Thus composition of $R^1, \ldots, R^j, R^{j+1}$ is inverse for move $M_3$ or $M_4$ and that decreases number of double points.

Number of double points decreased by moves $M_2, M_3, M_4$ so induction assumption is satisfied.

Such $j$ always exists, in other case we have a sequence of tightenings relating $\widehat{a}$ with minimal diagram that means $\widehat{a}$ is minimal annular diagram and $\big|a\big| = 0$.
\end{enumerate}Two twins with equivalent closures are $M$-equivalent with the common twin so they are $M$-equivalent.
\end{proof}


\begin{thebibliography}{0}
\bibitem{1}
J.~Birman, Knots, links, and mapping class groups,{\it Annals of Math Study}, {\bf 82}, Princeton University Press (1974)

\bibitem{2}
R.~Fenn, P.~Taylor, Introducing doodles, {\it Lect. Notes in Math.}, {\bf 722}, (Springer, Berlin, 1979), pp.~37--43

\bibitem{3}
M.~Khovanov, Doodle groups,
{\it Trans. Amer. Math. Soc.}, {\bf 349} (1997), 2297-2315

\bibitem{4}
A.~Bartholomew, R.~ Fenn, N.~Kamada, S.~Kamada,R.~Fenn, Doodles on surfaces I: An introduction to their basic properties, preprint: arXiv:1612.08473v1

\bibitem{5}
K.~Gongopadhyay, S.~Dey, Commutator subgroups of twin groups and Grothendieck's cartographical groups, preprint: arXiv:1804.05375v2
\end{thebibliography}
\end{document}